\pgfplotsset{compat=1.16}
\newtheorem{theorem}{\bf Theorem}[section]
\newtheorem{lemma}[theorem]{\bf Lemma}
\newtheorem{definition}[theorem]{\bf Definition}
\newtheorem{remark}[theorem]{\bf Remark}
\newtheorem{proposition}[theorem]{\bf Proposition}
\newtheorem{example}[theorem]{\bf Example}
\newenvironment{proof}{\noindent {\sc Proof.}}{\hfill $\square$}
\newcommand{\R}{\mathbb{R}}
\def \rnn {{\mathbb {R}}^{N+1}}
\def \LO {\mathscr{L}_0} 
\def \G {{\Gamma}}
\def \g {{\gamma}}
\def \Q {{\mathcal{Q}}}
\def \OO {{\mathbb{O}}}
\def \I {{\mathbb{I}}}
\def \K {\mathcal{K}}
\def \L {\mathscr{L}}
\def \H {\mathscr{H}}
\def \Heat {\mathcal{H}}
\def \g {{\gamma}}
\def \d {{\delta}}
\def \k {{\kappa}}
\def \r {{\varrho}}
\def \t {{\tau}}
\def \x {{\xi}}
\def \z {{\zeta}}
\def \phi {{\varphi}}
\def \G {{\Gamma}}
\def \O {{\Omega}}
\def \GG {{\mathfrak{g}}}
\def \div {{\text{\rm div}}}
\def \diag {{\text{\rm diag}}}
\def\p{\partial}
\def \tilde {\widetilde}
\def \AS {{ \mathscr{A}_{z_0} }}
\newcommand{\scp}[1]{\langle #1 \rangle}
\newcommand{\mres}{\mathbin{\vrule height 1.6ex depth 0pt width 0.13ex\vrule height 0.13ex depth 0pt width 1.3ex}}
\definecolor{brickred}{rgb}{0.8, 0.25, 0.33}
\definecolor{forestgreen}{rgb}{0.13, 0.55, 0.13}
\newenvironment{sergiorev}{\color{sergioblue}}{\color{black}}
\newcommand{\bsr}{\begin{sergiorev}}
\newcommand{\esr}{\end{sergiorev}}
\newenvironment{sergiorevii}{\color{forestgreen}}{\color{black}}
\newcommand{\bsrr}{\begin{sergiorevii}}
\newcommand{\esrr}{\end{sergiorevii}}
\newenvironment{diegorev}{\color{brickred}}
{\color{black}}
\newcommand{\bdr}{\begin{diegorev}}
\newcommand{\edr}{\end{diegorev}}
\begin{document}
\title{Mean value formulas for classical solutions to subelliptic evolution equations in stratified Lie groups}
\author{\sc{Diego Pallara}
\thanks{Dipartimento di Matematica e Fisica ``Ennio De Giorgi'', Universit\`{a} del Salento and INFN, Sezione di Lecce, Ex Collegio Fiorini - Via per Arnesano - Lecce (Italy). E-mail: 	diego.pallara@unisalento.it} \ 
\sc{Sergio Polidoro}
\thanks{Dipartimento di Scienze Fisiche, Informatiche e Matematiche, Universit\`{a} degli Studi di Modena e Reggio Emilia, Via Campi 213/b, 41125 Modena (Italy). E-mail: 	sergio.polidoro@unimore.it}
}
	
\date{ }
	
\maketitle

\bigskip

\begin{abstract}
We prove mean value formulas for classical solutions to second order linear differential equations in the form
 $$
 \p_t u = \sum_{i,j=1}^m X_i (a_{ij} X_j u) + X_0 u + f,
 $$
where $A = (a_{ij})_{i,j=1, \dots,m}$ is a bounded, symmetric and uniformly positive matrix with $C^1$ coefficients under the assumption that the operator $\displaystyle\sum_{j=1}^m X_j^2 + X_0 - \p_t$ is hypoelliptic and the vector fields $X_1, \dots, X_m$ and $X_{m+1} :=X_0 - \p_t$ are invariant with respect to a suitable homogeneous Lie group. Our results apply e.g. to degenerate Kolmogorov operators and parabolic equations on Carnot groups $\displaystyle \p_t u = \sum_{i,j=1}^m X_i (a_{ij} X_j u) + f$.
\end{abstract}
	
\setcounter{equation}{0} 

\section{Introduction}\label{secIntro}

The aim of this paper is to prove mean value formulas for degenerate  second order partial differential equations in the form
\begin{equation}\label{e1}
     \L u : = \sum_{i,j=1}^m X_i \left( a_{ij} X_j u \right) + X_0 u 
     + \sum_{j=1}^m b_{j} X_j u + c u - \p_t u = f,
\end{equation}
in some open set $\Omega \subset \R^{N+1}$. In the following, $z=(x,t)=(x_{1},\dots,x_{N},t)$ denotes the point in $\R^{N+1}$, $1 \le m \le N$ and the $X_{j}$'s in \eqref{e1} are smooth vector fields on $\R^{N}$, i.e.,
\begin{equation}\label{defX}
     X_{j}(x)=\sum_{k=1}^{N} \varphi_{k}^{j}(x)\p_{x_{k}}, \qquad j=0,\dots,m,
\end{equation}
$\varphi_{k}^{j}$ being $C^{\infty}$ functions. Starting from the vector fields $X_j$, the operator $\L$ in \eqref{e1} is written through the $m \times m$ symmetric matrix $A(z) = \left( a_{ij}(z) \right)_{i,j = 1, \dots, m}$, which has continuous entries and satisfies the usual ellipticity condition: there exists a constant $\Lambda \geq 1$ such that
\begin{equation}\label{elliptic}
  \Lambda^{-1}|\x|^{2}\le \sum_{i,j=1}^{m} a_{ij}(z)\x_{i}\x_{j} \le \Lambda |\x|^{2},
\end{equation}
for every $z \in \R^{N+1}$ and $\x\in \R^{m}$. To simplify the notation in the sequel, we still denote by $A$ the $(m+1)\times (m+1)$ matrix with entries $a_{ij}$ for $i,j=1,\ldots, m$ and $a_{(m+1)\, j}= a_{j\, (m+1)}=0$, for $j= 1, \dots, m+1$. We assume that the coefficients of the vector $b(z) = (b_1(z), \dots, b_m(z))$ and the functions $c$ and $f$ are bounded and continuous. As we are dealing with classical solutions, we also assume that $X_ia_{ij}$ and $X_jb_{j}$ are bounded continuous functions for $i,j\in\{1,\ldots, m\}$. The reason why we write the operator $\L$ in its divergence form is that we need to consider the fundamental solution $\Gamma^*$ to the adjoint equation $\L^* v = 0$. In the sequel we denote 
\begin{equation}\label{eY}
  X = \left( X_{1}, \dots, X_{m} \right), \qquad X_{m+1}=X_0-\p_{t}.
\end{equation}
and we let
\begin{equation} \label{e-Lie}
    \GG = {\rm Lie} (X_1, \ldots, X_m, X_{m+1})
\end{equation}
be the Lie algebra generated by $X_1, \ldots, X_m, X_{m+1}$. The main assumptions on the operator $\L$ are listed below. 

\begin{description}
\item[{\rm [H.1]}] The vector fields $X_1,\ldots,X_{m+1}$ satisfy the H\"{o}rmander rank condition
\begin{equation} \label{e-hrc}
    {\rm rank} \, \GG (z) = N+1 \qquad \text{for every} \quad z \in \R^{N+1}.
\end{equation}
  \item[{\rm [H.2]}] there exists a homogeneous Lie group
  $\mathbb{G}=\left(\rnn,\circ, \d_{\lambda}\right)$ such that
\begin{description}
  \item[{\it i)}] $X_{1},\dots,X_{m},X_{m+1}$ are 
  left-translation invariant on   $\mathbb{G}$;
  \item[{\it ii)}] $X_{1},\dots,X_{m},X_{m+1}$ are $\d_{\lambda}$-homogeneous of degree one; 
\end{description}
  \item[{\rm [H.3]}] There exists a fundamental solution $\Gamma^*$ for the adjoint operator $\L^*$:
  \begin{equation}\label{e1-ad}
     \L^* u : = \sum_{i,j=1}^m X_i \left( a_{ij} X_j u \right) - X_0 u 
     - \sum_{j=1}^m b_{j} X_j u + \Big( c - \sum_{j=1}^m X_j b_{j} \Big) u + \p_t u,
\end{equation}
as stated in Definition \ref{def-fund-sol}, having the properties \eqref{eq-claim-1} and \eqref {eq-claim-2}.
\end{description}

Let us briefly comment on our hypotheses. We first recall that the H\"{o}rmander condition {\rm [H.1]} implies that the operator 
\begin{equation}\label{e0}
  \LO:= \sum_{k=1}^{m}X_{k}^{2}+X_{m+1}
\end{equation}
is hypoelliptic. This means that every distributional solutiont $u$ to $\LO u = f$ in some open set $\Omega \subset \R^{N+1}$ belongs to $C^\infty(\Omega)$ and is a classical solution to $\LO u = f$ whenever $f$ is $C^\infty(\Omega)$. Note that $\LO$ is an operator of the form \eqref{e1} if we choose $A$ to be the $m \times m$ identity matrix, and $b=0, c = 0$. From this point of view, in the setting of the degenerate operators, $\LO$ plays the role of the heat operator in the family of the uniformly parabolic operators.

In Section \ref{secNotation} we recall the notation of homogeneous Lie groups $\mathbb{G}=\left(\rnn,\circ, \d_{\lambda}\right)$ and we give necessary and sufficient conditions on the vector fields $X_{1},\dots,X_{m},X_{m+1}$ for the validity of  condition {\rm [H.2]}. In Section \ref{secNotation} we also declare the properties of the \emph{fundamental solution} we need in this note, so that the meaning of condition {\rm [H.3]} will be clarified. 

Mean value formulas are a very classical tool for the treatment of harmonic functions. The first extenstions to the heat operator are due to Pini \cite{Pini1951} and to Watson \cite{Watson1973}. Mean value formulas for uniformly parabolic operators with $C^\infty$ smooth coefficients have been proved by Fabes and Garofalo \cite{FabesGarofalo}, and by Garofalo and Lanconelli \cite{GarofaloLanconelli-1989}. More recently, mean value formulas have ben proved in large generality by Cupini and Lanconelli in \cite{CupiniLanconelli2021}, who still consider operators with smooth coefficients. In the recent articles \cite{MalPalPol} on uniformly parabolic equations and \cite{PalPol}, on degenerate elliptic equations, Malagoli and the authors consider differential operators whose coefficients belong to a suitable space of functions $C^{1+\alpha}$ and $C^{1+\alpha}_{\mathbb G}$, respectively. This reduction of regularity, from $C^\infty$ to $C^{1+\alpha}$, reflects in weaker regularity of the fundamental solution. As a consequence, the integration by parts on its level sets, which is a crucial point in the proof and for $C^\infty$ coefficients relies on Sard's theorem, becomes a delicate issue. In the case of \emph{uniformly parabolic} operators with $C^1$ coefficients, and then $C^1$ fundamental solution, in \cite{MalPalPol} it is shown that this difficulty can be overcome in two ways, either by using a divergence theorem valid for \emph{almost $C^1$-regular boundaries}, or using De Giorgi's theory of perimeters. In \cite{PalPol} and in the present case of \emph{degenerate} operators with $C^1_{\mathbb G}$ coefficients the \emph{almost $C^1$-regularity} of the level sets of the fundamental solution is not guaranteed: indeed, it is not $C^1$ in the Euclidean sense. So, we are led to rely on the theory of functions with bounded variation and sets with finite perimeter in stratified groups, see Section \ref{secBV}, which provides us with the relevant formulation of the divergence theorem. In Section \ref{secProofs} we prove our main results, i.e., the mean value formulas and, as an important consequence, the strong maximum principle. Finally, in Section \ref{SecExamples} we show that our assumptions are verified in several important cases, such as degenerate Kolmogorov operators and parabolic operators on Carnot groups. 

We conclude this introduction with a remark on the the homogeneity property {\rm [H.2]} {\it ii)}. The layers in the Lie algebra $\mathfrak g$ have different degrees of homogeneity and these are at some extent arbitrary. Assuming for simplicity $X_0=0$, the commonest choice when dealing with regularity theory is the {\em parabolic scaling} $\delta_\lambda: (x,0)=(\lambda x,0)$ for $x$ in the first layer, and $\delta_\lambda: (0,t) \mapsto (0,\lambda^2 t)$  on the time variable. But, as already done in \cite{MalPalPol} for uniformly parabolic operators in $\R^{N+1}$, the condition {\rm [H.2]} {\it ii)} requires that $\delta_\lambda (x,t) = (\lambda x,\lambda t)$ and we do the same here, with $x$ in the first layer. Indeed, the very important property of $\Gamma^*$ encoded in the equality 
$$
  u(\x,\t) =\int_{\R^{N}}\Gamma^*(\x,\t;x, t)\phi(x)d x 
$$
cannot be used in connection with the divergence formula when the boundary integral is computed with respect to the Hausdorff measure, if this last is defined through the distance generated by the parabolic scaling. In fact, the surfaces $\{t=constant\}$ would have codimension 2 and therefore they would be negligible. 

\medskip

\noindent {\bf Acknowledgments.} The authors are members of Gruppo Naziona\-le per l’Analisi Matematica, la Probabilità e le loro Applicazioni (GNAMPA) of the Istituto Nazionale di Alta Matematica (INdAM). We thank Nicola Garofalo and Ermanno Lanconelli for their interest in our work and Valentino Magnani, Francesco Serra Cassano, Francesca Tripaldi and Davide Vittone for several useful discussions. 

\setcounter{equation}{0} 

\section{Preliminaries and the Main Results}\label{secNotation}

In this section we describe the group structure related to our differential operators and the properties that we require to the fundamental solutions. After that, we collect our hypotheses and state our main results. Further preliminaries concerning $BV$ functions and sets with finite perimeter are presented in Section \ref{secBV}. 

Let us specify the meaning of classical solution to the equation $\L u = f$ in some open set $\Omega \subset \R^{N+1}$. With this aim, we first recall the notion of \emph{Lie derivative}. For any $z_0 \in \O$ and $j=1, \dots, m+1$ we consider a path $\gamma$ defined in a neighborhood $I$ of the origin, such that
\begin{equation*}
 \gamma'(s) = X_j(\gamma(s)), \qquad \gamma(0)= z_0.
\end{equation*} \label{eq-Lie derivative}
Then the Lie derivative $X_j u(z_0)$ of $u$ at $z_0$ is
\begin{equation}
 X_j u(z_0) := \frac{d}{d s}u(\gamma(s))_{\mid s = 0}.
\end{equation}
We say that $u$ is a classical solution to $\L u = f$ in some open set $\Omega \subset \R^{N+1}$ if the Lie derivatives $X_i X_j u, i,j=1, \dots, m$ and $X_{m+1} u$ are defined as continuous functions and the differential equation is satisfied at every point of $\Omega$. The meaning of the  equation $\L^* u = g$ is analogous.

Next subsections contain some facts about the notions of Lie group and fundamental solution, that are likely known to the expert readers. We first recall some notation and results, then we state the main goal of this paper. 

\subsection{Homogeneous Lie groups}\label{SubsGroup}

A Lie group $\mathbb{G}=\left(\rnn,\circ\right)$ is said {\it homogeneous} if a family of dilations $\left(\d_{\lambda}\right)_{\lambda>0}$ exists on $\mathbb{G}$ and it is an automorphism of the group:
\begin{equation*}
\d_\lambda(z \circ \z) = \left(\d_\lambda z\right) \circ \left( \d_\lambda \z\right), \quad \text{for all} \ z, \z
\in \rnn \ \text{and} \ \lambda >0.
\end{equation*}
The assumptions [H.1] and [H.2] induce a direct sum decomposition of $\GG$
\begin{equation}\label{e-Oplus-bis}
    \GG = W_{1}\oplus\cdots\oplus W_{\mu},
\end{equation}
where 
\begin{equation*}
\begin{split}
    W_1 = & \text{span} \big\{X_1, \ldots X_m, X_{m+1} \big\}, \\
    W_k = & \text{span} \big\{[X_i, X_j]\mid X_i \in W_1, X_j \in W_{k-1} \big\}, \qquad k=2, \ldots, \mu.
\end{split}
\end{equation*}
Moreover $[X_i, X_j] = 0$ whenever $X_i \in W_\mu$, and  $X_j \in W_1$. In the sequel we denote by $m_j$ the dimension of $W_j$, for $j=1,\dots, \mu$. If we represent the dilation $\delta_\lambda$ on $\R^{N+1}$ by the following matrix
\begin{equation} \label{e-delta}
  \delta_\lambda = \diag ( \lambda \I_{m_1} , \lambda^2 \I_{m_2}, \ldots, \lambda^{\mu} \I_{m_\mu}),
\end{equation}
we call \emph{homogeneous dimension} of $\mathbb{G}$ the integer $\mathcal{Q} = m_1 + 2 \mu_2 + \cdots + \mu m_\mu$ and we have 
\begin{equation}\label{e-Q}
    \det \delta_\lambda = \lambda^{\mathcal{Q}}.
\end{equation}
We endow each fiber of the first layer $W_1$ with an inner product $\langle\cdot,\cdot\rangle_z$ (and the associated norm $|\cdot|_z$) that makes $X_1(z),\ldots,X_{m+1}(z)$ an orthonormal frame, see \cite{fraserser2}. 

We next give some general comments about the differential operators on homegeneous Lie groups considered in this note. An important consequence of the homogeneity of the Lie group is the \emph{pyramid-shaped} structure of the coefficients of the vector fields $X_j$'s. In the following we write  $x=x^{(1)}+x^{(2)}+\cdots+x^{(k)}$ with $x^{(j)}\in W_{j}$, and we use the notation in \eqref{defX}. As a consequence of the homogeneity of the differential operators $X_1, \ldots, X_{m+1}$ we have that the coefficients $\varphi^j_1, \ldots, \varphi^j_m$ are constant and the coefficients $\varphi^j_{m+1}, \ldots, \varphi^j_{m+m_2}$ are linear functions of the variable $x^{(1)}$.  In general, if $k$ is such that $x_k \in W_i$, then the coefficient $\varphi^j_{k}$ is a polynomial function of the variable $x^{(1)}, \ldots, x^{(i-1)}$. This fact plainly implies that
\begin{equation}\label{e-adj}
  X_j^* = - X_j, \qquad j=1, \dots, m+1. 
\end{equation} 
In particular, the adjoint operator $\L^*$ acts on sufficiently smooth functions as follows
\begin{equation}\label{e1*}
     \L^* v : = \sum_{i,j=1}^m X_i \left( a_{ij} X_j v \right) - X_0 v 
     - \sum_{j=1}^m b_{j} X_j v + 
     \Bigl(c-\sum_{j=1}^m X_jb_j\Bigr) v + \partial_t v.
\end{equation}
For the same reason, $X_1, \dots, X_{m+1}$ are \emph{complete vector fields}, that is the integral curve of $X_j$ is defined on the whole of $\R$ for $j=1, \dots, m+1$. 

We next discuss the problem of the existence of a Lie group $\mathbb{G}$ as required in condition {\rm [H.2]}. In the Examples \ref{ex-1} and \ref{ex-2} both the Lie group and the differential operators are given. In general, when a stratified Lie group is given, a standard procedure provides us with a family of left-invariant vector fields satisfying the H\"ormander condition {\rm [H.1]}. 

Vice versa, suppose that a collection of vector fields $X_1, \dots, X_{m+1}$ satisfying the H\"ormander condition {\rm [H.1]} is given. Then, under some furhter assumptions, it is possible to build a stratified Lie group such that the condition {\rm [H.2]} is satisfied. Specifically the following result holds true. Suppose that ${\rm Lie} (X_1, \ldots, X_m, X_{m+1})$ has dimension $N+1$ and that every vector field $X \in {\rm Lie} (X_1, \ldots, X_m, X_{m+1})$ is complete. Then there exists a Lie group $\mathbb{G} \left( \R^{N+1}, \circ \right)$ such that the Lie algebra of $\mathbb{G}$ agrees with ${\rm Lie} (X_1, \ldots, X_m, X_{m+1})$. The abstract version of this result is known as \emph{Third Fundamental Theorem of Lie}, see \cite[Theorem 3.15.1]{Varadarajan}. We refer to Theorem 1.1 in the article \cite{BonfLancCPAA} by Bonfiglioli and Lanconelli for the explicit construction of this Lie group on $\R^{N+1}$ in the case of vector fields with analytical coefficiens, while $C^\infty$ vector fields are considered by Biagi and Bonfiglioli in the more recent articles \cite{BiagiBonfiglioli} and \cite{Bonfiglioli}. 

We next introduce the distance we use to define the Hausdorff measure related to the notion of perimeter on which the divergence formula we need is based. From \cite[Theorem 5.1]{fraserser3} we know that there are constants $\varepsilon_j\in ]0,1],\ j=1,\ldots,\mu$, with $\varepsilon_1=1$, such that the function
\begin{equation}\label{defnorma_infty}
z \mapsto \|z \|_\infty = \max_{j=1,\ldots,\mu}\{\varepsilon_j|z_j|^{1/j}\},
\end{equation}
where $|z_j|$ denotes the Euclidean norm of the vector $z_j\in W_j$, defines a norm and as a consequence the distance
\begin{equation}\label{defd_infty}
d_\infty(z,w)=\|w^{-1}\circ z\|_\infty.
\end{equation}
We notice that $d_\infty$ is equivalent to the Carnot-Carath\'{e}odory distance (see \eqref{DefCCdist} below) and that for every compact set $K \subset \R^{N+1}$ there exist two positive constants $c_K^-$ and $c_K^+$, such that
\begin{equation}\label{e-dist-eucl}
  c_K^- |z-w| \le d_\infty(z,w) \le c_K^+ |z-w|^\frac{1}{\mu}, \qquad \text{for all} \ z,w \in K.
\end{equation}
The invariance properties
\begin{equation} \label{e-dist-INV}
   d_\infty (\zeta \circ z, \zeta \circ w) =  d_\infty (z,w), \quad
   d_\infty (\delta_\lambda z, \delta_\lambda w) = \lambda \, d_\infty (z,w),
\end{equation}
hold for every $z, w, \zeta$ in $\R^{N+1}$ and for every positive $\lambda$, see again \cite{fraserser3}. We next recall the notion of H\"older continuous functions on Lie groups. For $\alpha \in ]0,1]$, we say that a function $u$ defined on $\O$ is $\alpha$-H\"older continuous, and we write $u \in C_{\mathbb G}^\alpha(\Omega)$, if there exists a positive constant $M$ such that
\begin{equation} \label{eq-C-alpha}
 |u(z)-u(w)| \le M d_\infty (z,w)^\alpha, \quad \text{for every} \ z,w \in \Omega.
\end{equation}
Let us come to some geometric measure theoretical issues. For every $\nu\in W_1$, denote by $\nu^\bot$ the codimension 1 subspace of $W_1$ orthogonal to $\nu$ and introduce the hyperplane $\mathcal{N} =\nu^\bot\oplus W_2\oplus \cdots \oplus W_\mu$ in $\R^{N+1}$ and the constant
\begin{equation}\label{deftheta}
\theta= \{\H_e^{N}(B(0,1)\cap \mathcal{N})\} 
\end{equation}
where $B(0,1)=\{z\in\R^{N+1}|\ \|z\|_\infty\leq 1\}$ and $\H_e^{N}$ is the $N$-dimensional euclidean Hausdorff measure, see e.g. \cite[Section 2.8]{AmbFusPal00Fun}. The constant $\theta$ is introduced in \cite{MagnaniCalcVar}, is called spherical factor and is denoted $\omega_{{\mathbb G},\mathcal{Q}-1}$ there. Moreover, it is independent of $\nu$ because $d_\infty$ is \emph{vertically symmetric} according to Definition 6.1 in \cite{MagnaniIUMJ}, see Remark 6.2, Theorem 6.3 and Theorem 5.2 in \cite{MagnaniIUMJ}. We then recall the definition of the {\em spherical (${\mathcal Q}-1$)-dimensional Hausdorff measure ${\mathcal S}_{\mathbb G}^{{\mathcal Q}-1}$} of a Borel set $E$:  
\begin{equation*}
{\mathcal S}_{\mathbb G}^{{\mathcal Q}-1}(E) =\lim_{r\downarrow 0}
\inf\left\{
\sum_{i=0}^\infty \frac{\theta}{2^{{\mathcal Q}-1}}
({\rm diam}_{\mathbb G}(B_i))^{{\mathcal Q}-1}: B_i\text{  balls },\ E\subset \bigcup_{i=0}^\infty B_i,
\ {\rm diam}_{\mathbb G}\,(B_i)\leq r \right\},
\end{equation*}
where $\theta$ is the constant in \eqref{deftheta} and ${\rm diam}_{\mathbb G}(B):=\sup_{z,\zeta\in B}d_\infty(z,\zeta)$. Notice that we have normalized the measure ${\mathcal S}_{\mathbb G}^{{\mathcal Q}-1}$ so that ${\mathcal S}_{\mathbb G}^{{\mathcal Q}-1}(B(0,1)\cap {\mathcal N})=\theta$. 

\subsection{The fundamental solution}\label{SubsFundSol}

In this subsection we give a precise definition of {\em fundamental solution} $\Gamma^*$ for the operator ${\mathscr L}^*$ in  \eqref{e1-ad}, we fix the notation for its superlevel sets  and we list some further assumptions on $\Gamma^*$. 

\begin{definition} \label{def-fund-sol} We say that a function $\G^* = \G^*(\z; z)$ defined for every $(\z; z) = (\x,\t;x,t) \in \R^{N+1} \times \R^{N+1}$ with $t> \tau$ is a fundamental solution to $\L^* v = 0$ if it satisfies the following conditions for every $z = (x, t) \in \R^{N+1}$.
\begin{enumerate}
\item The function $\G^*( \, \cdot \, ; z)$ is a classical solution to the equation $\L^* \, \G^*(\cdot \,; z) = 0$ in $\R^{N} \times ]- \infty, t[$;
\item the function $\G^*( \, \cdot \,; z)$ belongs to $L^1(K)$ for every bounded measurable set $K \subset \R^{N} \times ]-\infty, t[$. Moreover, for every $(\x,\t) \in \R^{N} \times ]- \infty, t[$ and $\phi \in C_c(\R^N)$, the function
$$
  u(\x,\t):=\int_{\R^{N}}\Gamma^*(\x,\t;x, t)\phi(x)d x 
$$
is well defined and satisfies
$$
    \lim_{(\x,\t) \to (x,t)}u(\x,\t) = \varphi (x) \quad \text{for every} \quad x \in \R^N.
$$
\end{enumerate}
\end{definition}
We introduce some further notation in order to state our last assumption on $\Gamma^*$. Recall that $\mathcal{Q}$ denotes the homogeneus dimension of the Lie group $\mathbb{G}$. For every $z_0=(x_0, t_0) \in \R^{N+1}$ and for every $r>0$, we set
\begin{equation} \label{e-Psi}
\begin{split}
 \psi_r(z_0) & := \left\{ z \in \R^{N} \times ]- \infty, t_0[ \mid \Gamma^*(z; z_0) = \tfrac{1}{r} \right\}, \\
 \Omega_r(z_0) & := \left\{ z \in \R^{N} \times ]- \infty, t_0[ \mid \Gamma^*(z;z_0) > \tfrac{1}{r} \right\},
\end{split}
\end{equation}
and we call $\psi_r(z_0)$ and $\Omega_r(z_0)$ respectively the \emph{sphere} and the \emph{ball} with radius $r$ and \emph{center} $z_0$. As in the uniformly parabolic setting, here $z_0$ belongs to the topological boundary of $\Omega_r(z_0)$. We finally set
\begin{equation} \label{eq-Irepsilon}
 \mathcal{I}_{r, \varepsilon} (z_0) := \left\{ x \in \R^N \mid \Gamma^*(x,t_0-\varepsilon;z_0) > \tfrac{1}{r} \right\}.
\end{equation}
Note that $\mathcal{I}_{r, \varepsilon} (z_0) \ne \emptyset$ only for sufficiently small positive $\varepsilon$. We also rely on the following properties of the fundamental solution
\begin{itemize}
 \item For every $z_0 \in \R^{N+1}$ there exists $r_0>0$ such that the set
\begin{equation} \label{eq-claim-1}
 \Omega_r(z_0) \quad \text{is bounded for every} \ r < r_0;
\end{equation}
 \item  it holds
\begin{equation} \label{eq-claim-2}
\lim_{\varepsilon \to 0} {\mathscr H}_e^{N} \left( \mathcal{I}_{r, \varepsilon} (z_0) \right) = 0, \qquad
 \lim_{\varepsilon \to 0} \int_{\R^N \backslash \mathcal{I}_{r, \varepsilon} (z_0)} \Gamma^*(x,t_0 - \varepsilon; z_0) d x = 0.
\end{equation}
\end{itemize}
Note that this assumptions is analogous to the \emph{pointwise vanishing integral condition} stated in \cite{CupiniLanconelli2021} as Property $(H(z_0, \varrho))$. 

\subsection{Statement of the main results}\label{MainRes}

We define the gradient $\nabla_{\mathbb G}$ and the divergence ${\rm div}_{\mathbb G}$ as follows. We agree to identify a \emph{section} $F=\sum_{j=1}^{m+1} F_j X_j$ with its canonical coodinates $F= \left(F_1, \dots, F_m, F_{m+1}\right)$. With this agreement, we denote the gradient of $f\in C^1_{\mathbb G}(\R^n)$ and the divergence of $F\in C^1_{\mathbb G}(\R^n,\R^{m+1})$ by
\begin{equation}\label{defDivGrad}
\nabla_{\mathbb G} f: =\sum_{j=1}^{m+1} (X_j f) X_j \quad{\rm and} \quad  
{\rm div}_{\mathbb G}F := - \sum_{j=1}^{m+1} X_j^*F_j = \sum_{j=1}^{m+1} X_jF_j.
\end{equation}
Moreover 
\begin{equation} \label{e-kernels-QF}
	\langle A(z) \nabla_{\mathbb G} f(z), \nabla_{\mathbb G} f(z) \rangle_z := 
	\sum_{j,k=1}^{m} a_{jk}(z) X_j f(z) X_k f(z),
\end{equation}
for any $f\in C^1_{\mathbb G}(\R^n)$. In the following, $z_0\in \R^{N+1}$ is fixed and $\nabla_{\mathbb G}\Gamma^*(z;z_0)$ denotes the gradient with respect to the variable $z$.  With this notation, we set \\
\begin{equation} \label{e-kernels}
\begin{split}
	K_{\mathbb G} (z_0; z) & := 
	\frac{\langle A(z) \nabla_{\mathbb G} \Gamma^*(z;z_0), \nabla_{\mathbb G} \Gamma^*(z;z_0) \rangle_z }
	{|\nabla_{{\mathbb G}}\Gamma^*(z_0;z)|_z},\\
	M_{\mathbb G}(z_0; z) & :=  
	\frac{\langle A(z) \nabla_{\mathbb G} \Gamma^*(z;z_0), \nabla_{\mathbb G} \Gamma^*(z;z_0) \rangle_z }
	{\Gamma^*(z;z_0)^2}.
\end{split}
\end{equation}
Note that, according to \eqref{defDivGrad} and the definition of the matrix $A$, $\nabla_{\mathbb G}\Gamma^*(z; z_0) \in \R^{m+1}$, while only its first $m$ components  appear in \eqref{e-kernels-QF}. Moreover, we agree to set $K (z_0; z) = 0$ whenever $\nabla_{\mathbb G}\Gamma^*(z;z_0)=0$. The first achievements of this note are the following mean value formulas.  

\begin{theorem} \label{th-1}
Let $\L$ be a differential operator in the form \eqref{e1}, satisfying the ellipticity condition \eqref{elliptic} and $a_{ij}= a_{ji}$ for $i,j= 1, \dots,m$, as well as hypotheses {\rm [H.1]}, {\rm [H.2]} and {\rm [H.3]}. Suppose that the coefficients $a_{ij},b_j, c, X_i a_{ij},X_j b_j$ are continuous, and that \eqref{eq-claim-1} and \eqref{eq-claim-2} hold.
 
Let $\Omega$ be an open subset of $\R^{N+1}$, $f\in C(\Omega)$ and let $u$ be a classical solution to $\L u = f$ in $\Omega$. Then, for every $z_0 \in \Omega$ and for almost every $r \in ]0,r_0]$ such that $\overline{\Omega_r(z_0)} \subset \Omega$ we have 
\begin{align*} 
	u(z_0) =& \int_{\psi_r(z_0)} K_{\mathbb G} (z_0; z) u(z) \, d{\mathcal S}_{\mathbb G}^{{\Q}-1}(z) 
	+  \int_{\Omega_r(z_0)} f (z) \left( \tfrac{1}{r} - \Gamma^*(z;z_0) \right)\, dz  \\ 
	& +  \frac{1}{r} \int_{\Omega_r(z_0)} \big( \div_{\mathbb G} b(z) - c(z) \big) u(z) \, dz, \\ 
    u(z_0) =& \frac{1}{r} \int_{\Omega_r(z_0)} M_{\mathbb G} (z_0; z) u(z) \, dz 
	+ \frac{1}{r}  \int_0^{r} \Bigl(
	\int_{\Omega_\r (z_0)} f (z) \bigl( \tfrac{1}{\r} - \Gamma^*(z;z_0) \bigr) dz \Bigr)\, d \r 
	\\
    &+ \frac{1}{r} \int_0^{r}  \tfrac{1}{\r} \Bigl(
	\int_{\Omega_\r (z_0)} \big( \div_{\mathbb G} b(z) - c(z) \big) u(z) \, dz \Bigr) \, d \r. 
\end{align*} 
The second statement holds for \emph{every} $r\in ]0,r_0]$ such that ${\Omega_r(z_0)} \subset \Omega$. 
\end{theorem}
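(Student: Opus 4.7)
The plan is to adapt the classical Watson/Fabes--Garofalo integration-by-parts strategy on the superlevel sets of the fundamental solution, replacing the Euclidean divergence theorem by its $\mathbb G$-perimeter counterpart from Section \ref{secBV}. The starting point is the Green-type identity: a direct calculation using the symmetry of $A$, the integration-by-parts rule $X_j^*=-X_j$ of \eqref{e-adj}, and $X_{m+1}=X_0-\p_t$ gives, for sufficiently smooth $u,v$,
\begin{equation*}
v\,\L u - u\,\L^* v \;=\; \div_{\mathbb G} F,\qquad F_j := \sum_{i=1}^m a_{ij}(vX_i u - uX_i v)+b_j uv\ (j\le m),\quad F_{m+1}:=uv.
\end{equation*}

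Fix $z_0=(x_0,t_0)\in\Omega$ and $r\in(0,r_0]$ with $\overline{\Omega_r(z_0)}\subset\Omega$, and set $v_r:=\G^*(\cdot;z_0)-1/r$. Then $v_r\ge 0$ on $\Omega_r(z_0)$ with $v_r\equiv 0$ on $\psi_r(z_0)$, and because $\L^*\G^*(\cdot;z_0)=0$ off $z_0$ and $\L^*$ of a constant equals that constant times $c-\div_{\mathbb G}b$, one has $\L^* v_r=(\div_{\mathbb G}b-c)/r$ on $\Omega_r(z_0)\setminus\{z_0\}$. To bypass the singularity of $\G^*$ at $z_0$, I would integrate the Green identity on the truncated set $\Omega_r^\varepsilon:=\Omega_r(z_0)\cap\{t<t_0-\varepsilon\}$ and apply the $\mathbb G$-divergence theorem of Section \ref{secBV}; by the $BV_{\mathbb G}$ coarea formula applied to $\G^*$, the set $\Omega_r(z_0)$ has finite $\mathbb G$-perimeter for almost every $r\in(0,r_0]$, and for such $r$ the application is justified.

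The reduced boundary of $\Omega_r^\varepsilon$ decomposes into two pieces. On the curved piece $\psi_r\cap\{t<t_0-\varepsilon\}$, $v_r\equiv 0$ kills $F_{m+1}$ and leaves $F_j=-u\sum_i a_{ij}X_i\G^*$; the horizontal outward normal is $-\nabla_{\mathbb G}\G^*/|\nabla_{\mathbb G}\G^*|_z$, so contraction produces exactly $u\,K_{\mathbb G}(z_0;z)$ integrated against $d{\mathcal S}_{\mathbb G}^{{\Q}-1}$. On the horizontal slice $\{t_0-\varepsilon\}\times\mathcal I_{r,\varepsilon}(z_0)$ the Euclidean outward normal $(0,\dots,0,1)$ pairs only with the $\p_t$-component of $X_{m+1}$ (coefficient $-1$), producing the term $-\int_{\mathcal I_{r,\varepsilon}}u(\cdot,t_0-\varepsilon)\,v_r(\cdot,t_0-\varepsilon)\,dx$. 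Letting $\varepsilon\downarrow 0$, the concentration of $\G^*(\cdot,t_0-\varepsilon;z_0)$ at $x_0$ (a consequence of Definition \ref{def-fund-sol}(2)), combined with \eqref{eq-claim-2}, yields $\int_{\mathcal I_{r,\varepsilon}}u\,\G^*(\cdot,t_0-\varepsilon;z_0)\,dx\to u(z_0)$ and $\tfrac{1}{r}\int_{\mathcal I_{r,\varepsilon}}u\,dx\to 0$, while dominated convergence disposes of the volume terms. Rearranging produces the first formula for a.e.\ $r\in(0,r_0]$.

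The second formula follows by averaging the first over $r\in(0,R)$ and invoking the coarea identity
\begin{equation*}
\int_0^R\!\!\int_{\psi_r}K_{\mathbb G}\,u\,d{\mathcal S}_{\mathbb G}^{{\Q}-1}\,dr \;=\; \int_{\Omega_R}M_{\mathbb G}\,u\,dz,
\end{equation*}
which is obtained from the $BV_{\mathbb G}$ coarea formula for $w=\G^*$ via the change of variable $s=1/r$, together with the algebraic identity $K_{\mathbb G}\,|\nabla_{\mathbb G}\G^*|_z=(\G^*)^2\,M_{\mathbb G}$. Averaging smooths out the a.e.\ exceptional set in $r$, so the second identity holds for \emph{every} $R\in(0,r_0]$. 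The principal obstacle is geometric rather than algebraic: $\psi_r$ is only $C^1_{\mathbb G}$, not Euclidean-$C^1$, so classical Sard-type arguments on level sets are unavailable, and the proof relies crucially on the $\mathbb G$-perimeter machinery of Section \ref{secBV} and on Magnani's identification of the perimeter measure on $\psi_r$ with the spherical Hausdorff measure ${\mathcal S}_{\mathbb G}^{{\Q}-1}$. A secondary delicate point is the bookkeeping on the horizontal slice, where one must combine the Euclidean Lebesgue-measure estimates of \eqref{eq-claim-2} with the $\mathbb G$-perimeter representation on $\psi_r$ as the slice collapses onto the pole.
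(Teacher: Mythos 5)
Your proposal follows the same route as the paper: the Green identity $u\,\L^*v - v\,\L u = \div_{\mathbb G}\Phi$ with $v=\G^*(\cdot;z_0)-1/r$, truncation of $\Omega_r(z_0)$ at $\{t<t_0-\varepsilon\}$, the $\mathbb G$-divergence theorem via the $BV_{\mathbb G}$ localization lemma, passage $\varepsilon\downarrow 0$ using the concentration of $\G^*$ and \eqref{eq-claim-2} on the flat slice, identification $\beta_{\Omega_r(z_0)}\equiv 1$ on the $\mathbb G$-regular part of $\psi_r$ via Magnani, and the coarea substitution $s=1/r$ for the solid mean value formula.

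One step is glossed over, and it is the only point in the proof where a nontrivial justification is actually required in the limit $\varepsilon\downarrow 0$: the curved boundary contribution
\[
\int_{\partial_{\mathbb G}^*\Omega_r(z_0)\cap\{t<t_0-\varepsilon\}} u\,K_{\mathbb G}(z_0;\cdot)\,\beta_{\Omega_r(z_0)}\,d{\mathcal S}_{\mathbb G}^{\Q-1}.
\]
You cannot appeal to dominated convergence here without first knowing that $K_{\mathbb G}(z_0;\cdot)$ is integrable w.r.t.\ ${\mathcal S}_{\mathbb G}^{\Q-1}\mres\partial_{\mathbb G}^*\Omega_r(z_0)$, since $|\nabla_{\mathbb G}\G^*|_z$ appears in the denominator and nothing a priori bounds the kernel near the pole. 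The paper resolves this by first running the identity \eqref{eq-div-k} with $u\equiv 1$ near $\overline{\Omega_r(z_0)}$: the volume and slice terms then have finite limits, and since $K_{\mathbb G}\,\beta_{\Omega_r(z_0)}\ge 0$ and the truncated domains increase as $\varepsilon_k\downarrow 0$, monotone convergence gives $K_{\mathbb G}\in L^1(\partial_{\mathbb G}^*\Omega_r(z_0),{\mathcal S}_{\mathbb G}^{\Q-1})$. Only then is the curved boundary limit legitimate for general bounded continuous $u$. You should add this step; otherwise the argument is the paper's.
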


\begin{remark}\label{esponente}
{\rm An inspection of the proof of the second assertion in Theorem \ref{th-1} shows that we can modify the kernel $M_{\mathbb G}$ by raising $\Gamma^*$ to any exponent $\alpha>1$. Indeed, setting 
\[
M_{{\mathbb G},\alpha}(z_0; z) :=  
	\frac{\langle A(z) \nabla_{\mathbb G} \Gamma^*(z;z_0), \nabla_{\mathbb G} \Gamma^*(z;z_0) \rangle_z }
	{\Gamma^*(z;z_0)^\alpha} 
\]
we find
\begin{align*}
u(z_0) =& \frac{\alpha -1}{r^{\alpha -1}} \int_{\Omega_r(z_0)} M_{{\mathbb G},\alpha} (z_0; z) u(z) \, dz 
	+ \frac{\alpha-1}{r^{\alpha-1}} \int_0^{r} \Bigl(\r^{\alpha-2}
	\int_{\Omega_\r(z_0)}f(z) \bigl( \tfrac{1}{\r} - \Gamma^*(z;z_0)\bigr)dz \Bigr)\, d \r 
	\\
    &+ \frac{\alpha-1}{r^{\alpha-1}}\int_0^{r}\Bigl(\r^{\alpha-3}
	\int_{\Omega_\r (z_0)} \big( \div_{\mathbb G} b(z) - c(z) \big) u(z) \, dz \Bigr) \, d \r .
\end{align*}
In the statement of Theorem \ref{th-1} we took the exponent $\alpha=2$ because it is the usual one in the classical uniformly parabolic case. We recall that in Theorem 1.1 of \cite{CupiniLanconelli2021} other forms for the kernel $M_{\mathbb G}$ have been provided. 
}\end{remark} 

The mean value formulas in Theorem \ref{th-1} provide us with a simple proof of strong maximum and minimum principles for the operator $\L$ when $c \le 0$. We recall that an analogous result was obtained by using a barrier argument by Bony in \cite{Bony69} for H\"ormander's operators in the form \eqref{e0}, then by Amano in \cite{Amano79} for subelliptic operators with $C^1$ coefficients. In order to state the strong maximum and minimum principles, we introduce the notion of \emph{attainable set}. We say that a curve $ \g: [0,T] \rightarrow \R^{N+1}$ is \emph{$\L$-admissible} if it is absolutely continuous and 
\begin{equation*}
	 \dot{\g}(s) = \sum_{j=1}^m \omega_j (s) X_j (\gamma(s)) + X_{m+1} (\gamma(s))
\end{equation*}	
for almost every $s \in [0,T]$, with $\omega_1, \dots, \omega_m \in L^{2}([0,T])$.

\begin{definition} \label{def-prop-set}
Let $\O$ be any open subset of $\R^{N+1}$, and let $z_0 \in \O$. The \emph{attainable set} is 
\begin{equation*}
	\AS  ( \O ) = 
	\begin{Bmatrix}
	z \in \O \mid \hspace{1mm} \text{\rm there exists an 
	$\L$-admissible curve}
	\ \g : [0,T] \rightarrow \O \hspace{1mm} \\ 
	\hfill \text{\rm such that} \ \g(0) = z_0 \hspace{1mm} {\rm and}  
	\hspace{1mm} \g(T) = z
	\end{Bmatrix}.
\end{equation*}
We denote $\AS = \AS ( \O )$ whenever there is no ambiguity on the choice of the set $\O$. 
\end{definition}

We finally state a condition which relates the mean value formula to $\L$-admissible curves.

\begin{description}
\item[{\rm [H.4]}]
For every $z \in \R^{N+1}$ and $r>0$, and for every $\L$-admissible curve $\gamma$ such that $\gamma(0) = z$, there exists $s_0>0$ such that $\gamma(s) \in \Omega_r(z)$ for every $s \in ]0,s_0[$.
\end{description}


\begin{theorem} \label{Thm-smp}
Let $\L$ be a differential operator satisfying all the hypotheses of Theorem \ref{th-1} and let $u$ be a classical solution to $\L u = f$ in an open subset $\O\subset\R^{N+1}$. Assume in addition that $c \le 0, c - \div_{\mathbb{G}}b  < 0$ and that {\rm [H.4]} holds. Let $z_0=(x_0,t_0) \in \O$ be such that $u (z_0) = \max_\Omega u \ge 0$ and $f \ge 0$ in $\Omega$; then 
\begin{equation*}
u(z) = u(z_0) \quad \text{and} \quad f(z) = u(z_0) c(z) \qquad \text{for every} \ z \in \overline {\AS  ( \O )}.
\end{equation*}
The analogous result holds true if $u (z_0) = \min_\Omega u \le 0$ and $f \le 0$ in $\Omega$. 
Moreover, we can drop the assumption on the sign of $u (z_0)$ if $c = 0$. 
\end{theorem}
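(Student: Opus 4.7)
The plan is to deduce both the strong maximum and the strong minimum principles from the \emph{first} mean value formula of Theorem \ref{th-1}, together with {\rm [H.4]} and a standard connectedness argument along $\L$-admissible curves. First I would establish the local step: let $z_1\in\O$ be any point with $u(z_1)=u(z_0)$, pick $r>0$ with $\overline{\Omega_r(z_1)}\subset\O$, and set $v:=u(z_0)-u$, so that $v\ge 0$ in $\O$, $v(z_1)=0$, and, since $\L 1=c$, one has $\L v=cu(z_0)-f$. Plugging $v$ into the first identity of Theorem \ref{th-1} at $z_1$ gives
\begin{align*}
0 = & \int_{\psi_r(z_1)} K_{\mathbb G}(z_1;z)\, v(z)\, d{\mathcal S}_{\mathbb G}^{\Q-1}(z) + \int_{\Omega_r(z_1)} \bigl(cu(z_0)-f\bigr)\Bigl(\tfrac{1}{r}-\Gamma^*(z;z_1)\Bigr)\,dz \\
& + \tfrac{1}{r} \int_{\Omega_r(z_1)} \bigl(\div_{\mathbb G}b(z)-c(z)\bigr)\,v(z)\,dz.
\end{align*}
All three integrands are non-negative: $K_{\mathbb G}\ge 0$ and $v\ge 0$; on $\Omega_r(z_1)$ one has $1/r-\Gamma^*<0$ while $cu(z_0)-f\le 0$ from $c\le 0$, $u(z_0)\ge 0$, $f\ge 0$; and finally $\div_{\mathbb G}b-c>0$ by hypothesis. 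Since the sum vanishes each piece must vanish, and the strict positivity of $\div_{\mathbb G}b-c$ together with the continuity of $v$ forces $v\equiv 0$ on the open set $\Omega_r(z_1)$, i.e.\ $u\equiv u(z_0)$ there; the ``almost every $r$'' clause in Theorem \ref{th-1} is harmless since arbitrarily small admissible radii are available.

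Next I would propagate this along an $\L$-admissible curve $\gamma:[0,T]\to\O$ with $\gamma(0)=z_0$ and $\gamma(T)=z_1\in\AS(\O)$, by setting
\[
s^{*} := \sup\bigl\{s\in[0,T]\,:\,u(\gamma(\s))=u(z_0)\text{ for every }\s\in[0,s]\bigr\}.
\]
Hypothesis {\rm [H.4]} at $z_0$, combined with the local step for some admissible $r$ with $\overline{\Omega_r(z_0)}\subset\O$, yields $s^{*}>0$. By continuity $u(\gamma(s^{*}))=u(z_0)$; if $s^{*}<T$, the local step at $\gamma(s^{*})$ gives $u\equiv u(z_0)$ on $\Omega_r(\gamma(s^{*}))$ for some $r>0$, and {\rm [H.4]} applied to the shifted curve $\gamma(s^{*}+\cdot)$ produces $s_0>0$ such that $\gamma(s^{*}+s)\in\Omega_r(\gamma(s^{*}))$ for every $0<s<s_0$, contradicting the definition of $s^{*}$. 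Hence $s^{*}=T$, so $u(z_1)=u(z_0)$, and continuity extends the equality to $\overline{\AS(\O)}$.

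The identity $f(z)=c(z)u(z_0)$ then follows pointwise: for any $z\in\AS(\O)$ the local step gives $u\equiv u(z_0)$ on the open set $\Omega_r(z)$, hence all Lie derivatives $X_ju$ and $X_iX_ju$ vanish identically there. Since $z$ lies on $\partial\Omega_r(z)$ and these derivatives are continuous (as $u$ is a classical solution), letting $z'\to z$ with $z'\in\Omega_r(z)$ yields $X_j u(z)=X_iX_ju(z)=0$, and substituting into $\L u=f$ delivers $f(z)=c(z)u(z_0)$; continuity extends this to $\overline{\AS(\O)}$. The minimum statement follows by applying the whole argument to $-u$, which solves $\L(-u)=-f\ge 0$, attains maximum $-u(z_0)\ge 0$ at $z_0$, and leaves the sign hypotheses on $c$ and $\div_{\mathbb G}b-c$ unchanged; when $c\equiv 0$ the term $cu(z_0)$ disappears from the mean value identity regardless of the sign of $u(z_0)$, so that restriction can be dropped. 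The main obstacle is forcing the three simultaneously non-negative pieces of the integral identity to yield a pointwise equality on the open set $\Omega_r(z_1)$, a step that rests crucially on the \emph{strict} inequality $\div_{\mathbb G}b-c>0$ and is precisely what replaces the barrier argument used in \cite{Bony69,Amano79}.
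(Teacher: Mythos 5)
Your proof is correct and follows the same overall strategy as the paper: a local step showing that $u$ is constant on a ball $\Omega_r(z_1)$ around any maximum point, obtained by forcing three simultaneously signed terms of a mean value identity to vanish (with the strict inequality $\div_{\mathbb G}b-c>0$ doing the decisive work), followed by propagation along $\L$-admissible curves via {\rm [H.4]} and a supremum argument, and finally continuity of the Lie derivatives to get $f=u(z_0)c$ on the closure. The one genuine difference is your choice of mean value formula: the paper runs the argument with the \emph{second} (solid-integral) identity involving $M_{\mathbb G}$, precisely because that identity is valid for \emph{every} admissible $r$, whereas you use the \emph{first} (surface) identity, which only holds for almost every $r$; your observation that this is harmless is right, since the exceptional set of radii is null, {\rm [H.4]} is stated for every $r$, and the sets $\Omega_r(z_1)$ increase with $r$, so an admissible radius arbitrarily close to any target suffices. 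A second cosmetic difference is that you apply the formula directly to $v=u(z_0)-u$ (using $\L 1=c$ and linearity), while the paper applies it separately to the constant function $1$ and to $u$ and subtracts; these are equivalent. Your explicit continuity argument for the vanishing of $X_ju$ and $X_iX_ju$ at boundary points $z\in\partial\Omega_r(z)$ is in fact slightly more careful than the paper's one-line conclusion of the identity $f=u(z_0)c$ on $\overline{\AS(\O)}$.
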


Note that the condition [H.4] is satisfied by the examples considered in Section \ref{SecExamples}, where an application of Theorem \ref{Thm-smp} is given as well. The assumption $c - \div_{\mathbb{G}}b  < 0$ seems to be unnecessary for the validity of maximum and minimum principles. Indeed, in the article \cite{MalPalPol}, where a proof of the maximum principle for uniformly parabolic operators is based on the mean value formulas, this condition was removed by a suitable change of function. However we didn't succed to apply the same argument in the present setting as the structure of the operator $\L$ is very sensitive to analogous change of function.

\setcounter{equation}{0} 

\section{Functions of bounded variation}\label{secBV}

In this section we introduce some notation and the basic results on functions of bounded variation and sets with finite perimeter that we need to prove our mean value formulas. To simplify the notation, we put $n=N+1$ and denote by $\lambda_n$ the Lebesgue measure in $\R^n$. We also keep the notation $z=(x,t)$ for points in ${\mathbb R}^{N+1}$ and recall that the homogeneous dimension of ${\mathbb G}$ is denoted by ${\mathcal Q}$, see \eqref{e-delta}, \eqref{e-Q}. If $\mu$ is a Borel measure and $E$ is a Borel set, we use the notation $\mu \mres E(B) = \mu(E \cap B)$.

For an open set $\Omega\subset\R^n$ we define the space $BV_{\mathbb{G}}(\Omega)$ of functions of bounded variation in $\mathbb{G}$ following \cite{CapDanGar94The}. We refer to \cite{AmbFusPal00Fun} and to \cite{fraserser3} for more information on the Euclidean and the subriemannian case, respectively. 

\begin{definition}\label{defBV} 
Let $\Omega$ be an open subset of $\R^n$. For $u\in L^{1}({\Omega})$ we define
\begin{equation}  \label{deftotvarG}
\left\| \nabla_{\mathbb G} u \right\| \left(\Omega \right)
=\sup\left\{  \int_{{\mathbb R}^n}u(z) \, \mathrm{div}_{\mathbb G} g(z)dz:\ g\in C_{c}^{1}\left(\Omega,{\mathbb{R}}^{m+1}\right),\left\Vert g\right\Vert _{\infty}\leq1\right\}.
\end{equation}
We say that $u\in BV_{\mathbb{G}}(\Omega)$ if $\left\| \nabla_{\mathbb G} u \right\|(\Omega)$ is finite. 
\end{definition}

\begin{remark}\label{Xdependence}
{\rm We point out (see \cite[Remarks 2.10, 2.19]{fraserser3}) that the (usual) notation $\left\| \nabla_{\mathbb G} u \right\| \left(\Omega \right)$
is somehow misleading, as the total variation depends upon the fixed vector fields $X_{1}, \dots, X_{m+1}$, whereas the functional class $BV_{\mathbb{G}}(\Omega)$ only depends on $\mathbb{G}$ and $\Omega$. 
}\end{remark}

With the same proof contained e.g. in \cite[Prop. 3.6]{AmbFusPal00Fun}, it is possible to show that if $u$ belongs to $BV_\mathbb{G}(\Omega)$ then its total variation $\left\| \nabla_{\mathbb G} u \right\|$ is a finite positive Radon measure and there is a $\left\| \nabla_{\mathbb G} u \right\|$-measurable function $\sigma_{u}:\Omega \rightarrow {\mathbb{R}}^{m+1}$ such that $|\sigma_{u}(z)|_z=1$ for $\left\| \nabla_{\mathbb G} u \right\|$-a.e. $z\in \Omega$ and
\begin{equation}    \label{defsigmaf}
\int_{{\Omega}}u(z)\mathrm{div}_{\mathbb G}g(z)dz=
\int_{{\Omega}}\langle g,\sigma_{u}\rangle_z \,  d \left\| \nabla_{\mathbb G} u \right\|
\end{equation}
for all $g\in C_{\mathbb{G},c}^{1}(\Omega,{\mathbb{R}}^{m+1})$. We denote by $\nabla_{\mathbb G} u$ the vector measure $-\sigma_{u}\left\| \nabla_{\mathbb G} u \right\|$, so that $X_{j}u$ is the measure $(-\sigma_{u})_{j}\left\| \nabla_{\mathbb G} u \right\|$ and the following integration by parts formula holds true
\begin{equation}   \label{byparts_BV}
\int_{{\Omega}}u(z)X_{j}g(z)dz = - 
\int_{{\Omega}}g\left(z\right)d\left( X_{j}u\right) \left( z\right)
\end{equation}
for all $g\in C_{\mathbb{G},c}^{1}(\Omega)$.   

\begin{definition}[Sets of finite $\mathbb{G}$-perimeter] \label{perimeter}
Let $\chi_{E}$ be the characteristic function of the measurable set $E\subset\mathbb{R}^{n}$; we say that $E$ is a set of finite $\mathbb{G}$-perimeter in $\Omega$ if $\left\| \nabla_{\mathbb G} \chi_{E} \right\|(\Omega)$ is finite, and we call (generalized inward) $\mathbb{G}$-normal the $(m+1)$-vector 
\[
\nu_{E}(z) = - \sigma_{\chi_{E}}(z),
\]
which is defined $\|\nabla_{\mathbb G}\chi_E\|$-a.e.
\end{definition}
As customary, we write $P_\mathbb{G}(E,B)$ instead of $\left\| \nabla_{\mathbb G} \chi_{E} \right\|(B)$ for any Borel set $B$. Also, notice that if $A$ is open, then 
\begin{equation}\label{peronopens}
P_\mathbb{G}(E\cap A,A)=P_\mathbb{G}(E,A) ,
\end{equation}
see (2.25) in \cite{fraserser3}, and recall that 
$|\nu_{E}(z)|_z =1$ for $P_\mathbb{G}(E)$-a.e. $z\in\mathbb{R}^{n}.$ With this notation, (\ref{defsigmaf}) takes the form 
\begin{equation}   \label{div thm}
\int_{E}\mathrm{div}_{\mathbb G}g(z)dz=-
\int_{{\Omega}}\langle g,\nu_{E}\rangle_z dP_{\mathbb G}(E),
\end{equation}
for all $g\in C_{\mathbb{G},c}^{1}(\Omega,\R^{m+1})$. 

We refer to \cite[Theorem 2.3.5]{fraserser} for a proof of the following statement that connects the total variation of a $BV_{\mathbb G}$ function with the perimeter of its level sets. 

\begin{proposition} [Coarea formula]\label{coarea} 
If $u\in BV_{\mathbb{G}}(\Omega)$ for some open set $\Omega \subset \R^n$ then for a.e.
$\tau\in\mathbb{R}$ the set $E_{\tau}=\{x\in{\Omega}:\ u(x)>\tau\}$ has finite $\mathbb{G}$-perimeter in $\Omega$ and
\begin{equation}    \label{coareaformula}
\left\| \nabla_{\mathbb G} u \right\| (\Omega)=
\int_{-\infty}^{+\infty} P_{\mathbb G}(E_\tau,\Omega)d\tau.
\end{equation}
Conversely, if $u\in L^{1}(\Omega)$ and 
$\int_{-\infty}^{+\infty}P_{\mathbb G}(E_\tau,\Omega)d\tau<\infty$, then $u\in BV_\mathbb{G}(\Omega)$ and equality \eqref{coareaformula} holds. Moreover, if $g:\Omega \rightarrow\mathbb{R}$ is a Borel function, then
\begin{equation}    \label{coareag}
\int_{\Omega}g(z)d \left\| \nabla_{\mathbb G} u \right\| (z)=
\int_{-\infty}^{+\infty}\int_{\Omega}g(z)d P_{\mathbb G}(E_\tau)(z)d\tau. 
\end{equation}
\end{proposition}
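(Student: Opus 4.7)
The plan is to follow the classical Fleming--Rishel strategy, adapted to the subriemannian $BV_{\mathbb{G}}$ framework, in three steps.

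\emph{Step 1 (Fleming--Rishel inequality).} For any $u\in L^{1}(\Omega)$, I split $u=u^{+}-u^{-}$ and apply the layer-cake identity to each piece. Using that $\int_{\Omega}\mathrm{div}_{\mathbb{G}}g\,dz=0$ for every $g\in C^{1}_{\mathbb{G},c}(\Omega,\R^{m+1})$ (a direct consequence of \eqref{e-adj} and the pyramidal structure of the $\varphi^{j}_{k}$), a Fubini rearrangement gives
\[
\int_{\Omega}u(z)\,\mathrm{div}_{\mathbb{G}}g(z)\,dz=\int_{-\infty}^{+\infty}\!\!\int_{E_{\tau}}\mathrm{div}_{\mathbb{G}}g(z)\,dz\,d\tau.
\]
If $\|g\|_{\infty}\le 1$, the divergence theorem \eqref{div thm} bounds the inner integral by $P_{\mathbb{G}}(E_{\tau},\Omega)$. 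Taking the supremum in \eqref{deftotvarG} yields
\[
\|\nabla_{\mathbb{G}}u\|(\Omega)\le\int_{-\infty}^{+\infty}P_{\mathbb{G}}(E_{\tau},\Omega)\,d\tau,
\]
which already proves the converse assertion: if the right-hand side is finite, then $u\in BV_{\mathbb{G}}(\Omega)$.

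\emph{Step 2 (Reverse inequality).} Given $u\in BV_{\mathbb{G}}(\Omega)$, I invoke the Meyers--Serrin density theorem for $BV_{\mathbb{G}}$, which furnishes a sequence $(u_{k})\subset C^{\infty}(\Omega)\cap BV_{\mathbb{G}}(\Omega)$ with $u_{k}\to u$ in $L^{1}(\Omega)$ and $\|\nabla_{\mathbb{G}}u_{k}\|(\Omega)\to\|\nabla_{\mathbb{G}}u\|(\Omega)$. For smooth $u_{k}$, the measure $\|\nabla_{\mathbb{G}}u_{k}\|$ is absolutely continuous with density $|\nabla_{\mathbb{G}}u_{k}|_{z}$. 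Sard's theorem ensures that for a.e.\ $\tau$ the level set $\{u_{k}=\tau\}$ is a smooth Euclidean hypersurface and the representation
\[
P_{\mathbb{G}}(\{u_{k}>\tau\},\Omega)=\int_{\{u_{k}=\tau\}\cap\Omega}\frac{|\nabla_{\mathbb{G}}u_{k}|_{z}}{|\nabla u_{k}|_{e}}\,d\mathcal{H}^{N}_{e}
\]
holds; combined with the Euclidean coarea formula applied to $|\nabla_{\mathbb{G}}u_{k}|_{z}/|\nabla u_{k}|_{e}$, this yields the pointwise coarea identity
\[
\|\nabla_{\mathbb{G}}u_{k}\|(\Omega)=\int_{-\infty}^{+\infty}P_{\mathbb{G}}(\{u_{k}>\tau\},\Omega)\,d\tau.
\]
Extracting a subsequence so that $\chi_{\{u_{k}>\tau\}}\to\chi_{E_{\tau}}$ in $L^{1}(\Omega)$ for a.e.\ $\tau$, and using that $E\mapsto P_{\mathbb{G}}(E,\Omega)$ is lower semicontinuous under $L^{1}$ convergence of characteristic functions (immediate from the sup definition \eqref{deftotvarG}), Fatou's lemma gives
\[
\int_{-\infty}^{+\infty}P_{\mathbb{G}}(E_{\tau},\Omega)\,d\tau\le\liminf_{k}\int_{-\infty}^{+\infty}P_{\mathbb{G}}(\{u_{k}>\tau\},\Omega)\,d\tau=\|\nabla_{\mathbb{G}}u\|(\Omega).
\]
Together with Step 1 this proves \eqref{coareaformula}.

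\emph{Step 3 (Identity \eqref{coareag}).} Applying \eqref{coareaformula} on every open subset $A\subset\Omega$ and using \eqref{peronopens}, the two Radon measures $B\mapsto\|\nabla_{\mathbb{G}}u\|(B)$ and $B\mapsto\int_{\R}P_{\mathbb{G}}(E_{\tau})(B)\,d\tau$ coincide on open sets, hence on all Borel sets by the uniqueness of Radon extension. Approximating a nonnegative Borel $g$ by simple functions and passing to the limit via monotone convergence extends the identity to \eqref{coareag}, and a decomposition $g=g^{+}-g^{-}$ covers the signed case.

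The main obstacle is the smoothing step: in the subriemannian setting standard Euclidean mollification does not cooperate with the horizontal derivatives $X_{j}$, so one must rely on a group-compatible mollification of the type developed in \cite{fraserser3}, paired with Sard's theorem to pass from the Euclidean coarea formula for smooth $u_{k}$ to the $\mathbb{G}$-perimeter identity above. All other ingredients—layer-cake, lower semicontinuity, and the Radon extension argument—are routine once this approximation and the smooth-function coarea identity are in hand.
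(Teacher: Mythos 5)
The paper does not prove this proposition: it is quoted verbatim from the literature, with the proof delegated to \cite[Theorem 2.3.5]{fraserser}. Your argument is, in substance, the standard Fleming--Rishel proof that underlies that reference, adapted to the vector-field setting, and I find no essential gap in it: the layer-cake/Fubini rearrangement plus the supremum definition \eqref{deftotvarG} gives the inequality $\left\| \nabla_{\mathbb G} u \right\|(\Omega)\le\int_{\R}P_{\mathbb G}(E_\tau,\Omega)\,d\tau$ and hence the converse statement; Meyers--Serrin strict approximation (which is precisely the content of \cite{fraserser}) combined with Sard, the Euclidean coarea formula and the $L^1$-lower semicontinuity of $P_{\mathbb G}$ gives the reverse inequality; and localization to open sets via \eqref{peronopens} upgrades \eqref{coareaformula} to the measure identity \eqref{coareag}. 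Two small points deserve attention. First, in Step 1 you should not invoke the divergence theorem \eqref{div thm}, which presupposes that $E_\tau$ has finite perimeter (not yet known at that stage); the bound $\big|\int_{E_\tau}\mathrm{div}_{\mathbb G}\,g\,dz\big|\le P_{\mathbb G}(E_\tau,\Omega)$ follows directly from \eqref{deftotvarG} applied to $\chi_{E_\tau}$, with the convention $P_{\mathbb G}=+\infty$ when the perimeter is not finite. Second, you use without comment that $\tau\mapsto P_{\mathbb G}(E_\tau,B)$ is measurable (needed both for the Fubini rearrangement and for \eqref{coareag}); this is routine via lower semicontinuity and the $L^1$-continuity of $\tau\mapsto\chi_{E_\tau}$ at all but countably many $\tau$, but it should be recorded. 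With these repairs the proof is complete and coincides with the one the authors implicitly rely on by citation.
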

Let us come to some finer properties of $BV_\mathbb{G}$ functions and perimeters. In order to put formula \eqref{div thm} in a form closer to the classical one we define the {\em measuretheoretic} or {\em essential} boundary. 
\begin{definition}[Essential boundary]\label{DefEssBdry}
Let $E\subset \R^n$ be a measurable set. We say that $z\in \partial_{\mathbb G}^*E$ if 
\[
\limsup_{r\to 0}\frac{\lambda_n(B_r(z) \cap E)}{\lambda_n(B_r(z))}>0,\qquad
\limsup_{r\to 0}\frac{\lambda_n(B_r(z)\setminus E)}{\lambda_n(B_r(z))}>0 
\]
and we call $\partial_{\mathbb G}^*E$ the {\em measuretheoretic} or {\em essential} boundary of $E$.
\end{definition}

It is immediately checked that $\partial_{\mathbb G}^*E \subset \partial E$. Observe that two different but equivalent distances on ${\mathbb G}$ give the same essential boundary.  

Let us see that the divergence theorem \eqref{div thm} can be rewritten in a form much closer to the classical formula, see \cite[Theorems 5.3, 5.4]{ambr2}, where the problem is settled in general metric measure spaces, and \cite[Theorem 4.16]{AmbKleLeD09JGA}.

\begin{theorem}  \label{hausdorffrep}
Given a set of finite ${\mathbb G}$-perimeter $E\subset\R^n$, for $P_{\mathbb G}(E,\cdot)$-a.e. $z\in\R^n$ there is $\bar{r}(z)>0$ such  that
\[
\ell_{\mathbb G} r^{{\mathcal Q}-1} \leq P_{\mathbb G}(E, B(z,r)) \leq L_{\mathbb G}r^{{\mathcal Q}-1}
\]
for every $r<\bar{r}(z)$, where $0<\ell_{\mathbb G} \leq L_{\mathbb G}<\infty$ are two constants depending only on the group. As a consequence, $P_{\mathbb G}(E,\cdot)$ is concentrated on $\partial_{\mathbb G}^*E$, i.e., $P_{\mathbb G}(E,{\mathbb G}\setminus\partial_{\mathbb G}^*E)=0$, and  
there is a Borel function
$\beta_E:{\mathbb R}^n\to [\ell_{\mathbb G},L_{\mathbb G}]$ such that
\begin{equation}\label{intrepr}
P_{\mathbb G}(E,B)=\int_{B\cap \partial_{\mathbb G}^*E}\beta_E (z)\,d{\mathcal S}_{\mathbb G}^{{\mathcal Q}-1}(z),\quad \forall B \text{\ \ Borel set}.
\end{equation}
\end{theorem}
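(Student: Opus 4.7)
The plan is to prove Theorem~\ref{hausdorffrep} in three stages: first establish pointwise upper and lower density estimates for the perimeter measure at $P_{\mathbb G}(E,\cdot)$-almost every point, then deduce concentration on the essential boundary, and finally obtain the integral representation by a Radon--Nikodym/differentiation argument adapted to the group metric $d_\infty$. The backbone is the relative isoperimetric inequality in the homogeneous group $\mathbb G$ together with the fact that every Radon measure in a doubling metric space admits asymptotic doubling at almost every point.

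For the density bounds, I would first recall the relative isoperimetric inequality valid in Carnot-type groups: there exists $c_{\mathbb G}>0$ such that for every set of finite perimeter $E$ and every ball $B(z,r)$,
\[
\min\bigl\{\lambda_n(B(z,r)\cap E),\ \lambda_n(B(z,r)\setminus E)\bigr\}^{({\mathcal Q}-1)/{\mathcal Q}}
\le c_{\mathbb G}\, P_{\mathbb G}\bigl(E,B(z,2r)\bigr),
\]
coupled with $\lambda_n(B(z,r)) = c\, r^{\mathcal Q}$. The lower bound $\ell_{\mathbb G} r^{{\mathcal Q}-1}\le P_{\mathbb G}(E,B(z,r))$ for small $r$ then follows at any point $z \in \partial_{\mathbb G}^* E$ by plugging in the densities entering Definition~\ref{DefEssBdry}: both $\lambda_n(B(z,r)\cap E)$ and $\lambda_n(B(z,r)\setminus E)$ are $\gtrsim r^{\mathcal Q}$ along all sufficiently small scales (this is upgraded from $\limsup>0$ to ``all small $r$'' by standard continuity-of-density arguments applied at $P_{\mathbb G}(E,\cdot)$-a.e.\ point, where the density of $E$ is necessarily in $(0,1)$). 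The upper bound $P_{\mathbb G}(E,B(z,r))\le L_{\mathbb G} r^{{\mathcal Q}-1}$ at $P_{\mathbb G}(E,\cdot)$-a.e.\ $z$ is obtained by combining the total-variation finiteness with the asymptotic doubling property of $P_{\mathbb G}(E,\cdot)$; explicitly, if the upper density were infinite on a positive measure set, a Vitali covering (available on $(\R^n, d_\infty)$ via the Federer covering lemma for homogeneous groups) would yield disjoint balls with $\sum r_i^{{\mathcal Q}-1}$ controlled while $\sum P_{\mathbb G}(E,B_i)$ would be unbounded, a contradiction.

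Next, the concentration statement $P_{\mathbb G}(E,\R^n\setminus\partial_{\mathbb G}^* E)=0$ is a direct consequence. If $z\notin\partial_{\mathbb G}^* E$, then the density of $E$ or of its complement vanishes at $z$; the relative isoperimetric inequality, applied iteratively at scales $r\to 0$, forces $P_{\mathbb G}(E,B(z,r))/r^{{\mathcal Q}-1}\to 0$, contradicting the just-proved lower bound. Hence every point in the support of $P_{\mathbb G}(E,\cdot)$ where the density bound holds lies in $\partial_{\mathbb G}^* E$, and the exceptional set is $P_{\mathbb G}(E,\cdot)$-negligible.

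For the integral representation, the bounds $\ell_{\mathbb G} r^{{\mathcal Q}-1}\le P_{\mathbb G}(E,B(z,r))\le L_{\mathbb G} r^{{\mathcal Q}-1}$ together with the normalization $\mathcal S_{\mathbb G}^{{\mathcal Q}-1}(B(0,1)\cap \mathcal N)=\theta$ imply that $P_{\mathbb G}(E,\cdot)$ is mutually absolutely continuous with $\mathcal S_{\mathbb G}^{{\mathcal Q}-1}\mres\partial_{\mathbb G}^* E$ on its support. The differentiation theorem for Radon measures in the doubling metric space $(\R^n,d_\infty,\lambda_n)$ then yields a Borel density
\[
\beta_E(z)\;=\;\lim_{r\downarrow 0}\frac{P_{\mathbb G}(E,B(z,r))}{\mathcal S_{\mathbb G}^{{\mathcal Q}-1}(\partial_{\mathbb G}^* E\cap B(z,r))}
\]
that is finite, bounded below by $\ell_{\mathbb G}$ and above by $L_{\mathbb G}$ (after adjusting constants by the normalization $\theta$), and \eqref{intrepr} follows. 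The main obstacle I expect is the \emph{uniform} lower density at every small scale: passing from the $\limsup>0$ definition of $\partial_{\mathbb G}^* E$ to a full lower bound for all $r<\bar r(z)$ requires the relative isoperimetric inequality plus a continuity-in-$r$ argument, which is the delicate technical heart of the proof and is the reason the authors delegate this statement to~\cite{ambr2,AmbKleLeD09JGA}.
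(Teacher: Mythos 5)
The paper does not actually prove Theorem \ref{hausdorffrep}: it is imported from \cite[Theorems 5.3, 5.4]{ambr2} and \cite[Theorem 4.16]{AmbKleLeD09JGA}, so there is no internal argument to compare with. Your three-stage outline (two-sided density estimates, concentration on $\partial_{\mathbb G}^*E$, representation by differentiation of measures) is indeed the strategy of those references, and the last stage is essentially right once one replaces ``the differentiation theorem in the doubling space $(\R^n,d_\infty,\lambda_n)$'' by Federer's differentiation theory for \emph{asymptotically doubling} measures applied to $P_{\mathbb G}(E,\cdot)$ itself --- this is exactly what the two-sided density estimates buy, and it is needed because ${\mathcal S}_{\mathbb G}^{{\mathcal Q}-1}\mres\partial_{\mathbb G}^*E$ is not known to be doubling.

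The first two stages, however, contain genuine gaps. (i) You establish the lower bound $P_{\mathbb G}(E,B(z,r))\ge\ell_{\mathbb G}r^{{\mathcal Q}-1}$ only at points of $\partial_{\mathbb G}^*E$, and to do so you upgrade the $\limsup>0$ conditions of Definition \ref{DefEssBdry} to volume densities bounded away from zero at \emph{all} small scales by invoking that the density of $E$ lies in $(0,1)$ at $P_{\mathbb G}(E,\cdot)$-a.e.\ point --- but that is precisely the density estimate one is trying to prove, so the step is circular. The correct order is reversed: one first shows, at $P_{\mathbb G}(E,\cdot)$-a.e.\ $z$, that $m(r):=\min\{\lambda_n(B(z,r)\cap E),\lambda_n(B(z,r)\setminus E)\}\ge c\,r^{\mathcal Q}$ for all small $r$, via a differential inequality for $m$ obtained from the relative isoperimetric inequality combined with the a.e.-$r$ localization estimate $P_{\mathbb G}(E\cap B(z,r))\le P_{\mathbb G}(E,B(z,r))+m'(r)$; membership in $\partial_{\mathbb G}^*E$, hence the concentration statement, is then automatic. (ii) Your concentration argument uses the isoperimetric inequality in the wrong direction: it controls volume \emph{by} perimeter, so at a point $z\notin\partial_{\mathbb G}^*E$ where a volume density vanishes the inequality becomes vacuous and yields no decay of $P_{\mathbb G}(E,B(z,r))/r^{{\mathcal Q}-1}$; moreover the lower bound you claim to contradict was only proved on $\partial_{\mathbb G}^*E$, so no contradiction is available. (iii) The upper bound cannot follow from the Vitali covering argument you sketch, which would apply to any finite Radon measure: a Dirac mass has infinite upper $({\mathcal Q}-1)$-density on a set of full measure. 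The set of infinite upper density is ${\mathcal S}_{\mathbb G}^{{\mathcal Q}-1}$-negligible by general comparison theorems, but concluding that it is $P_{\mathbb G}(E,\cdot)$-negligible presupposes $P_{\mathbb G}(E,\cdot)\ll{\mathcal S}_{\mathbb G}^{{\mathcal Q}-1}$, which is part of the conclusion; the upper estimate again requires the specific structure of the perimeter measure, not general covering arguments.
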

The above theorem allows us to rewrite formula \eqref{div thm} as an integral on the essential boundary with respect to the $({\mathcal Q}-1)$-dimensional spherical Hausdorff measure as follows:
\begin{equation}\label{div thm boundary}
\int_{E}\mathrm{div}_{\mathbb G}g(z)dz=
-\int_{\partial_{\mathbb{G}}^{\ast}E}\langle
g,\nu_{E}\rangle_z \, \beta_E(z)\, d{\mathcal S}_{\mathbb G}^{{\mathcal Q}-1} .
\end{equation}
In the following remarks we collect some useful results proved by Franchi, Serapioni and Serra Cassano \cite[Theorem 2.3.5]{fraserser} and V. Magnani \cite{MagnaniIUMJ} on functions belonging to $C^1_{\mathbb G}(\Omega)$, for which much more information is available. 

\begin{remark}\label{smoothcase}{\rm 
If $\Omega$ is bounded, a function $u$ in $C^1_{\mathbb G}(\Omega)$ also belongs to $BV_{{\mathbb G},{\rm loc}}(\Omega)$ and by \eqref{byparts_BV} the equalities
\[
\int_\Omega X_j^*g(z)u(z)dz = \int_\Omega g(z)X_ju(z)dz, \qquad j=1,\ldots,m+1,
\]
hold for every $g\in C^1_c(\Omega)$. Recalling \eqref{e-adj}, we find that the measure derivative of $u$ is $\nabla_{\mathbb G}u\,\lambda_n$. Moreover, we say that $S\subset\Omega$ is a ${\mathbb G}$-regular surface if for any $p\in S$ there are an open neighborhood $U$ of $p$ and $f\in C^1_{\mathbb G}(U)$ such that
\[
S\cap U = \{z\in U:\ f(z)=0\ \textrm{and}\ \nabla_{\mathbb G}f(z)\neq 0\}.
\]
Let $\Omega$ be an open subset of ${\mathbb R}^n$, $f\in C^1_{\mathbb G}(\Omega)$, $E=\{f<0\}$, $S=\{f=0\}$, and let $p\in\Omega$ be such that $f(p)=0$ and $\nabla_{\mathbb G}f(p)\neq 0$. Then, as proved in \cite[Theorem 2.1]{fraserser2}, there is a neighborhood $U$ of $p$ such that $S\cap U$ has finite perimeter and
\begin{equation}\label{nuGamma}
\nu_E(z)=-\frac{\nabla_{\mathbb G}f(z)}{|\nabla_{\mathbb G}f(z)|_z}, \qquad  z\in S\cap U.
\end{equation}
In such a situation the equality $\partial^*_{\mathbb G}(E\cap U) = \partial(E\cap U)$ holds, see \cite[Theorem 3.3]{fraserser2}. Notice also that the topological dimension of a $C^1_{\mathbb G}$-regular surface is $n-1$, see \cite[Proposition 3.1]{fraserser2}, whereas its Hausdorff dimension with respect to the distance $d_\infty$ is ${\mathcal Q}-1$, see \cite[Corollary 3.7]{fraserser2}.
}\end{remark}

\begin{remark}\label{thetacostante}
{\rm If $E$ is a finite perimeter set and $\partial_{\mathbb G}^*E$ is ${\mathbb G}$-regular, then formulas \eqref{intrepr} and \eqref{div thm boundary} become simpler. Indeed, in this case the normal unit vector $\nu(z)$ is defined for every $z\in \partial_{\mathbb G}^*E$ and the function $\beta_E$ is constant, $\beta_E(z)=1$ for every $z\in\partial^*E$, by Theorem 4.1 in \cite{MagnaniIUMJ} and the definition of the constant $\theta$ in \eqref{deftheta}. This is the reason why we have chosen the distance $d_\infty$ and we have normalized the Hausdorff measure. These considerations are important in our proof of Theorem \ref{th-1}, where \eqref{div thm boundary} is applied to sets with finite perimeter such that \emph{a part of} the essential boundary is ${\mathbb G}$-regular. Indeed, Theorem 4.1 in \cite{MagnaniIUMJ} is local, hence if $F\subset\partial_{\mathbb G}^*E$ is ${\mathbb G}$-regular and relatively open, then $\beta_E=1$ in $F$.
}\end{remark}

We end this section with a variant of the {\em localization lemma}, see \cite[Proposition 3.56]{AmbFusPal00Fun} for the Euclidean case and \cite[Lemma 2.21]{fraserser3} for the case of groups, where balls instead of hyperplanes are considered. 

\begin{lemma}\label{localization}
Let $E\subset \R^n$ be a set of finite ${\mathbb G}$-perimeter. Then, for $\lambda_1$-a.e. $\tau\in{\mathbb R}$ the set $E\cap\{t<\tau\}$ has finite ${\mathbb G}$-perimeter and for every Borel set $B$ we have 
\[  
\nabla_{\mathbb G}\chi_{E\cap\{t<\tau\}}(B)=
\nabla_{\mathbb G}\chi_E(B\cap\{t<\tau\})
-\lambda_N(B\cap\{t=\tau\}) e_{N+1}.
\]
\end{lemma}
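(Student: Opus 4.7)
The strategy is to approximate $\chi_{\{t<\tau\}}$ by smooth cut-offs $\phi_\epsilon$ depending only on $t$, insert $\phi_\epsilon g$ into the $\mathbb{G}$-divergence formula \eqref{div thm} for $E$, and pass to the limit $\epsilon\to 0^+$ in order to identify the vector-measure derivative of $\chi_{E\cap\{t<\tau\}}$. Fix $g\in C^1_{\mathbb{G},c}(\Omega,\R^{m+1})$ and let $\phi_\epsilon\in C^\infty(\R;[0,1])$ be nonincreasing with $\phi_\epsilon\equiv 1$ on $(-\infty,\tau-\epsilon]$ and $\phi_\epsilon\equiv 0$ on $[\tau,+\infty)$. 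The decisive structural observation is that $X_1,\ldots,X_m$ and $X_0$ only differentiate in the spatial variables $x$, so $X_j\phi_\epsilon=0$ for $j\le m$ while $X_{m+1}\phi_\epsilon=(X_0-\partial_t)\phi_\epsilon=-\phi'_\epsilon(t)$. Leibniz then gives
\[
\div_{\mathbb G}(\phi_\epsilon g)=\phi_\epsilon\div_{\mathbb G}g-\phi'_\epsilon(t)\,g_{m+1},
\]
and inserting $\phi_\epsilon g\in C^1_{\mathbb{G},c}(\Omega,\R^{m+1})$ into \eqref{div thm} produces
\[
\int_E\phi_\epsilon\div_{\mathbb G}g\,dz - \int_E\phi'_\epsilon(t)\,g_{m+1}\,dz = -\int_\Omega\phi_\epsilon\,\langle g,\nu_E\rangle_z\,dP_{\mathbb G}(E).
\]

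Next I let $\epsilon\to 0^+$. Dominated convergence handles the first left-hand term and the right-hand side, giving the limits $\int_{E\cap\{t<\tau\}}\div_{\mathbb G}g\,dz$ and $-\int_{\{t<\tau\}}\langle g,\nu_E\rangle_z\,dP_{\mathbb G}(E)$, respectively, provided $\tau$ is not one of the at most countably many atoms of the $t$-pushforward of the finite measure $P_{\mathbb G}(E)$. For the middle term I choose $\phi_\epsilon$ so that $\psi_\epsilon:=-\phi'_\epsilon$ is a standard approximate identity concentrated at $\tau$; Fubini yields $-\int_E\phi'_\epsilon(t)g_{m+1}\,dz=\int_\R H(t)\psi_\epsilon(t)\,dt$ with $H(t):=\int_{\R^N}\chi_E(x,t)g_{m+1}(x,t)\,d\lambda_N(x)\in L^1(\R)$, and the Lebesgue differentiation theorem gives the limit $H(\tau)$ for $\lambda_1$-a.e.\ $\tau$. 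Combining the three limits expresses $\int_{E\cap\{t<\tau\}}\div_{\mathbb G}g\,dz$ as the right-hand side of \eqref{div thm} applied to $E\cap\{t<\tau\}$ corrected by a singular term supported on the slice $\{t=\tau\}$ and acting only on the $(m+1)$-th component of $g$; comparison with \eqref{defsigmaf} then shows $\chi_{E\cap\{t<\tau\}}\in BV_{\mathbb G}(\Omega)$ and delivers the claimed decomposition of $\nabla_{\mathbb G}\chi_{E\cap\{t<\tau\}}$.

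The main technical obstacle is that the exceptional null set of Lebesgue points depends a priori on $g$. I would dispose of it by fixing a countable family dense in $C^1_{\mathbb{G},c}(\Omega,\R^{m+1})$ (with respect to uniform convergence with equi-bounded supports), taking the union of the corresponding $\lambda_1$-null sets, and extending the resulting identity to arbitrary $g$ by continuity, which is permissible because both sides are continuous linear functionals of $g$ in view of the finiteness of $P_{\mathbb G}(E)$ and the compactness of $\mathrm{supp}\,g$.
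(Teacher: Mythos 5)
Your proof is correct and rests on exactly the same key observation as the paper's: a cut-off depending only on $t$ is annihilated by $X_1,\dots,X_m$ and is seen only by $X_{m+1}=X_0-\partial_t$, so the Leibniz rule produces precisely the $e_{N+1}$-directed term concentrated on the slice $\{t=\tau\}$. The execution differs, though. The paper works on the primal side: it multiplies $\chi_E$ by the Lipschitz ramp $[(\tau-t)/s\vee 0]\wedge 1$, uses lower semicontinuity of the total variation together with the a.e.\ differentiability of the monotone function $\tau\mapsto\lambda_n(E\cap\{t<\tau\})$ to obtain finiteness of $P_{\mathbb G}(E\cap\{t<\tau\})$ first, and only then identifies the limit measure. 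You work on the dual side, testing \eqref{div thm} against $\phi_\epsilon g$ and identifying the limiting functional via Lebesgue points of $H_g$, which yields finiteness and the decomposition in one stroke and makes the slice term $g\mapsto\int_{E\cap\{t=\tau\}}g_{m+1}\,d\lambda_N$ completely explicit (the paper's final identification is terse by comparison). The price is the $g$-dependence of the exceptional null set, and here one small correction is needed: the countable family must be dense for the $C^1_{\mathbb G}$ topology (uniform convergence of $g$ \emph{and} of $\div_{\mathbb G}g$ on a fixed compact set), not merely for uniform convergence, because the left-hand side $\int_{E\cap\{t<\tau\}}\div_{\mathbb G}g\,dz$ is not controlled by $\|g\|_\infty$ alone; such a countable family exists, so the argument goes through. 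You also correctly discard the at most countably many $\tau$ charged by the pushforward of $P_{\mathbb G}(E)$ under $t$, which plays the role of the paper's condition ${\mathcal S}_{\mathbb G}^{{\mathcal Q}-1}(\partial_{\mathbb G}^*E\cap\{t=\tau\})=0$. Note finally that your computation shows the singular term is the restriction of $\lambda_N$ to $E\cap\{t=\tau\}$ (not to all of $\{t=\tau\}$), consistent with what the paper's own proof actually produces.
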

\begin{proof} Setting $u_s(x,t)=\{[(\tau -t)/s)\vee 0]\wedge 1\}\chi_E(x,t)$, we have $u_s\to \chi_{E\cap\{t<\tau\}}$ in $L^1({\mathbb R}^n)$ as $s\downarrow 0$, whence by the semicontinuity of the total variation,
\begin{equation*}
P_{\mathbb G}(E\cap\{t<\tau\})\leq \liminf_{s\downarrow 0} \| \nabla_{\mathbb G} u_s \|({\mathbb R}^n).
\end{equation*}
Now we compute 
\[
 \nabla_{\mathbb G} u_s = 
 \{[(\tau -t)/s)\vee 0]\wedge 1\}
 \nabla_{\mathbb G} \chi_E\mres\{t <\tau-s\} - \frac{1}{s}
\lambda_n\mres (E\cap\{\tau-s< t <\tau\})e_{N+1}  
\]
hence, setting $m(\tau)=\lambda_n(E\cap\{t<\tau\})$, we have 
\[
\limsup_{s\downarrow 0} \| \nabla_{\mathbb G} u_s \|({\mathbb R}^n) \leq \|\nabla_{\mathbb G} \chi_E\|(\{t <\tau\}) + m'(\tau),
\]
where $m'$ denotes the right derivative of $m$, which is finite for $\lambda_1$-a.e. $\tau\in\R$. This proves that $E\cap\{t<\tau\}$ has finite ${\mathbb G}$-perimeter for $\lambda_1$-a.e. $\tau\in\R$. Therefore, since by \eqref{peronopens}
\[
P_{\mathbb G}(E\cap\{t<\tau\},\{t<\tau\})
= P_{\mathbb G}(E,\{t<\tau\}),
\] 
and ${\mathcal S}_{\mathbb G}^{{\mathcal Q}-1}(\partial_{\mathbb G}^*E\cap\{t=\tau\})=0$ for a.e. $\tau$, we have 
\[
\nabla_{\mathbb G}\chi_{E\cap\{t<\tau\}}(B)=
\nabla_{\mathbb G}\chi_E(B\cap\{t<\tau\})
+\nabla_{\mathbb G}\chi_E(B\cap\{t=\tau\})
\] 
for any Borel set $B$ and $\lambda_1$-a.e. $\tau$. But, $\nabla_{\mathbb G}\chi_E(z)=-e_{N+1}{\mathcal S}_{\mathbb G}^{{\mathcal Q}-1} = -e_{N+1}{\mathcal L}^N$ for $z\in E\cap\{t=\tau\}$ and the thesis follows.
\end{proof}

\setcounter{equation}{0} 

\section{Proof of the mean value formula}\label{secProofs}

In this section we give the proof of the mean value formulas and of the strong maximun principle. In what follows, $\Omega$ is an open subset of $\R^{N+1}$, $z_0=(x_0,t_0) \in \Omega$, $\Gamma^*(z; z_0)$ is the fundamental solution of $\L^*$, and $r_0>0$ is such that $\overline{\Omega_{r_0}(z_0)}$ is a bounded subset of $\Omega$. 

First, we state the following consequence of Lemma \ref{localization}. It applies to the set $\Omega_r(x_0,t_0) \cap \big\{ t < t_0 - \varepsilon \big\}$, see Fig. 1 below.

\begin{tikzpicture}
\clip (-.5,8) rectangle (6.7,2);
\shadedraw [top color=black!10] (-2,6) rectangle (7,1); 
\begin{axis}[axis y line=middle, axis x line=middle, 
    xtick=\empty,ytick=\empty, 
    ymin=-1.1, ymax=1.1,
    xmin=-.2,xmax=1.8, samples=101, rotate= -90]
\addplot [black,line width=.7pt, domain=-.01:.01] {sqrt(.0001 - x * x)} node[above] {\hskip12mm $(x_0,t_0)$};
\addplot [black,line width=.7pt, domain=-.01:.01] {-sqrt(.0001 - x * x)};
\addplot [black,line width=.7pt, domain=.001:1] {sqrt(- 2 * x * ln(x))}; 
\addplot [black,line width=.7pt,domain=.001:1] {- sqrt(- 2 * x * ln(x))} 
node[below] { \hskip20mm $\Omega_r(x_0,t_0)$};
\end{axis}
\draw [<-,line width=.4pt] (2.8475,7) -- (2.8475,2);
\draw [line width=.7pt,] (-1,6) -- (6.7,6) node[below] { \hskip-18mm  $t = t_0 - \varepsilon$}; 
\end{tikzpicture} 

{\sc Fig.1}  - The set $\Omega_r(x_0,t_0) \cap \big\{ t < t_0 - \varepsilon \big\}$.

\bigskip

\begin{proposition}\label{PropLocalization}
With the notation above, for a.e. $\varepsilon >0$ and $r \in ]0, r_0]$ we have 
\begin{align*}
\int_{\Omega_r(z_0)\cap\{ t< t_0 - \varepsilon \}} {\rm div}_{\mathbb G} \, \Phi \, dz= &
\int_{\partial_{\mathbb G}^*\Omega_r(z_0)\cap\{ t< t_0 - \varepsilon \}} 
\langle \Phi,\nu_{\Omega_r(z_0)}\rangle_z \beta_{\Omega_r(z_0)} d{\mathcal S}_{\mathbb G}^{{\mathcal Q}-1} \\
& \qquad \qquad +\int_{\Omega_r(z_0)\cap\{ t = t_0 - \varepsilon \}}
\langle \Phi,e_{N+1}\rangle_z d{\lambda}_{N} 
\end{align*}
for every $\Phi \in C^1_{\mathbb G}(\Omega)$.
\end{proposition}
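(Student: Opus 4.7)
The plan is to combine Lemma \ref{localization} (applied with $E=\Omega_r(z_0)$ and slicing level $\tau=t_0-\varepsilon$) with the divergence formula \eqref{div thm boundary}, so that the boundary of $E_\varepsilon:=\Omega_r(z_0)\cap\{t<t_0-\varepsilon\}$ is naturally split into a ``lateral'' piece contained in $\{t<t_0-\varepsilon\}$ and a ``top'' piece on the slice $\{t=t_0-\varepsilon\}$. A preliminary step is to guarantee that $\Omega_r(z_0)$ itself has finite $\mathbb{G}$-perimeter for a.e.\ $r\in\,]0,r_0]$: since $\Gamma^*(\,\cdot\,;z_0)$ is a $C^1_{\mathbb G}$ solution of $\L^*v=0$ away from $z_0$ on the bounded set $\overline{\Omega_{r_0}(z_0)}$, it belongs to $BV_{\mathbb G,\loc}$ by Remark \ref{smoothcase}, and Proposition \ref{coarea} then yields $P_{\mathbb G}(\Omega_r(z_0))<\infty$ for a.e.\ such $r$.

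For such an $r$, Lemma \ref{localization} gives, for $\lambda_1$-a.e.\ $\varepsilon>0$, the identity
\[
\nabla_{\mathbb G}\chi_{E_\varepsilon}(B)=\nabla_{\mathbb G}\chi_{\Omega_r(z_0)}\bigl(B\cap\{t<t_0-\varepsilon\}\bigr)-e_{N+1}\,\lambda_N\bigl(B\cap\{t=t_0-\varepsilon\}\cap\Omega_r(z_0)\bigr),
\]
which reads off the perimeter measure and generalized normal of $E_\varepsilon$: on the lateral part $\partial^*_{\mathbb G}\Omega_r(z_0)\cap\{t<t_0-\varepsilon\}$ both agree with those of $\Omega_r(z_0)$, whereas on the horizontal slice $\Omega_r(z_0)\cap\{t=t_0-\varepsilon\}$ the perimeter measure reduces to the Euclidean $\lambda_N$ and the generalized normal is aligned with $e_{N+1}$. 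Plugging $E_\varepsilon$ into the divergence identity \eqref{div thm} (applied after multiplying $\Phi$ by a cutoff equal to $1$ on $\overline{E_\varepsilon}$ if needed), rewriting the lateral contribution via the Hausdorff representation \eqref{intrepr}--\eqref{div thm boundary} with density $\beta_{E_\varepsilon}=\beta_{\Omega_r(z_0)}$ on that portion (by the locality of the construction of $\beta$ in Theorem \ref{hausdorffrep}), and writing the top contribution directly as a $\lambda_N$-integral yields the two terms displayed in the statement.

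The main difficulty I expect is the careful bookkeeping of the ``a.e.\ $\varepsilon$ and $r$'' quantifier: Lemma \ref{localization} excludes a $\lambda_1$-null set of levels $\varepsilon$, while the coarea formula excludes a $\lambda_1$-null set of levels $r$, so the conclusion must be asserted on a full-measure set in both parameters simultaneously. A secondary but delicate point is propagating the sign convention from Definition \ref{perimeter} through the decomposition, so that the $+\langle\Phi,e_{N+1}\rangle_z$ sign in the statement emerges correctly from the $-e_{N+1}$ term appearing in the localization identity.
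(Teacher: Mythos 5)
Your proposal is correct and follows essentially the same argument as the paper: it establishes $\Gamma^*(\cdot;z_0)\in BV_{\mathbb G,\loc}(\Omega)$ via Remark \ref{smoothcase}, deduces that $\Omega_r(z_0)$ has finite $\mathbb G$-perimeter for a.e.\ $r$ from the coarea formula (Proposition \ref{coarea}), applies Lemma \ref{localization} with $E=\Omega_r(z_0)$ and $\tau=t_0-\varepsilon$, and concludes by the divergence formula \eqref{div thm boundary}. Your extra commentary on locality of $\beta$ and on the sign bookkeeping is sensible elaboration of steps the paper treats implicitly, and the double ``a.e.'' quantifier you flag is handled exactly as you describe, first in $r$ then in $\varepsilon$.
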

\begin{proof} 
As $\Gamma^* (\cdot; z_0)\in BV_{{\mathbb G},\rm loc}(\Omega)$, see Remark \ref{smoothcase}, by the coarea formula \eqref{coareaformula} for a.e. $r \in ]0, r_0]$ the set $\Omega_r(z_0)$ has finite ${\mathbb G}$-perimeter. We then apply Lemma \ref{localization} with $E=\Omega_r(z_0)$ and $\tau = t_0 - \varepsilon$ to have that ${\Omega_r(z_0)\cap\{ t< t_0 - \varepsilon \}}$ has finite perimeter. The conclusion follows from \eqref{div thm boundary}. 
\end{proof}

\medskip

\begin{proof}{\sc of Theorem \ref{th-1}.} Let $u$ be a classical solution to $\L u = f$ in $\Omega$ and let $\Omega_r(z_0)$ be such that $\overline{\Omega_r(z_0)}$ is a compact subset of $\Omega$. We assume, as it is not restrictive, that $u$ vanishes out of a compact subset of $\Omega$ so that $u$ can be smoothly extended by setting $u(z) = 0$ for every $z \not \in \Omega$. We prove our claim by applying Proposition \ref{PropLocalization} with $r \in ]0,r_0]$ and $t=t_0-\varepsilon_k$, for some monotone sequence $(\varepsilon_k)_{k \in \mathbb{N}}$ such that $\varepsilon_k \to 0$ as $k \to \infty$. Of course, we choose $r$ and $\varepsilon_k$ such that the statement of Proposition \ref{PropLocalization} holds true.

For this choice of $r$, we set $v(z) := \Gamma^*(z;z_0) - \frac{1}{r}$, and we note that 
\begin{equation} \label{eq-div-L*}
\begin{split}
 u(z) \L^* v(z) - v(z) \L u(z) =& 
 \sum_{i,j=1}^m X_j \big( u(z) a_{ij}(z) X_i v(z) - v(z) a_{ij}(z) X_i u(z) \big) 
\\
& -\sum_{j=1}^m X_j(u(z)v(z)b_j(z)) - X_{m+1}(u(z) v(z))
\end{split}
\end{equation}
for every $z \in \Omega \backslash \big\{z_0 \big\}.$ We then recall that $\L^* v = \frac{1}{r} \left( \div_{\mathbb G} \, b - c \right)$ and $\L u  = f$ in $\Omega \backslash \big\{z_0 \big\}$. 
Then \eqref{eq-div-L*} can be written as follows
\begin{equation*}
 \frac{1}{r} \left( \div_{\mathbb G} b - c \right) u - v f = 
 \div_{\mathbb G} \Phi , \qquad 
 \Phi:=(u A\nabla_{\mathbb G} v - v A\nabla_{\mathbb G} u - uv b, - uv).
\end{equation*}
We then apply Proposition \ref{PropLocalization} and we find
\begin{equation} \label{eq-div-k}
\begin{split}
 &\int_{\Omega_r(z_0) \cap \{ t<t_0-\varepsilon_k \}}
\Bigl(\tfrac{1}{r} ( \div_{\mathbb G} b(z) - c(z) ) u(z) - v(z) f(z)\Bigr) dz
\\ 
& \qquad = - \int_{\partial_{\mathbb G}^*\Omega_r(z_0) \cap \{ t<t_0-\varepsilon_k \}}
\!\!\!\!\!\!\!\!\!\! \scp{\Phi, \nu_{\Omega_r(z_0)}}_z \beta_{\Omega_r(z_0)}
d{\mathcal S}_{\mathbb G}^{{\mathcal Q}-1} 
+ \int_{\Omega_r(z_0) \cap \{ t=t_0-\varepsilon_k \}} 
\!\!\!\!\!\!\!\!\!\! \scp{\Phi, e_{N+1}}_z d{\lambda}_{N}.
\end{split}
\end{equation} 
We next let $k \to \infty$ in the above identity. As $v \in L^1({\Omega_r(z_0)})$, and the remaining functions appearing in the left hand side of \eqref{eq-div-k} are bounded and continuous on $\overline{\Omega_r(z_0)}$, we plainly have 
\begin{equation} \label{eq-div-1}
\begin{split}
\lim_{k \to \infty} 
\int_{\Omega_r(z_0)\cap\left\{ t<t_0-\varepsilon_k\right\}}
 &\left( \tfrac{1}{r} \left( \div_{\mathbb G} b(z) - c(z) \right) u(z) - v(z) f(z)\right) dz = \\
& \int_{\Omega_r(z_0)} \left( \tfrac{1}{r} \left( \div_{\mathbb G} b(z) - c(z) \right) u(z) - v(z) f(z)\right) dz.
\end{split}
\end{equation}
We next prove that 
\begin{equation} \label{eq-div-2}
 \lim_{k \to + \infty} \int_{\Omega_r(z_0) \cap \left\{ t=t_0-\varepsilon_k \right\}} \scp{\Phi, e_{N+1}}_z d\lambda_{N} = u(z_0).
\end{equation}
We have $\scp{\Phi, e_{N+1}}_z (z) = u(z) v(z)$, then 
\begin{equation} \label{eq-psir}
 \int_{\Omega_r(z_0) \cap \left\{ t=t_0-\varepsilon_k \right\}} \!\!\scp{\Phi, e_{N+1}}_z d \lambda_{N} =
 \int_{\widetilde{\mathcal{I}}_k} \!\! \!\! 
 u(x,t_0- \varepsilon_k) \left( \Gamma^*(x,t_0 - \varepsilon_k;x_0, t_0) - \frac{1}{r} \right) d x,
\end{equation}
where we have denoted
\begin{equation*} 
 \widetilde{\mathcal{I}}_k := \left\{ x \in \R^N \mid (x,t_0-\varepsilon_k) \in \overline{\Omega_r (z_0)} \right\}.
\end{equation*}
Note that $\widetilde{\mathcal{I}}_k$ agrees with the set $\mathcal{I}_{r, \varepsilon} (z_0)$ defined in \eqref{eq-Irepsilon}, with $\varepsilon = \varepsilon_k$, then the following assertion holds because of our assumption \eqref{eq-claim-2}
\begin{equation} \label{eq-claim-2bis}
\lim_{k \to \infty} \H^{N}_e \left( \widetilde{\mathcal{I}}_k \right) = 0, \qquad
 \lim_{k \to \infty}  \int_{\R^{N} \backslash \widetilde{\mathcal{I}}_k} \!\! \!\! 
 \Gamma^*(x,t_0 - \varepsilon_k;x_0, t_0) d x = 0.
\end{equation}
Since $\Gamma^*$ is the fundamental solution to $\L^* v = 0$, and $u$ is bounded and continuous, we have
\begin{equation*} 
 \lim_{k \to \infty}  \int_{\R^N} \!\! \!\! \Gamma^*(x,t_0 - \varepsilon_k;x_0, t_0) u(x,t_0)  d x = u(x_0,t_0).
\end{equation*}
The conclusion of the proof of \eqref{eq-div-2} then follows from \eqref{eq-claim-2bis}, by using again the fact that $u$ is bounded and uniformly continuous.

We are left with the first integral in the right hand side of \eqref{eq-div-k}. We preliminarily note that $v(z) = 0$ for every $z \in \partial \Omega_r(z_0)$, then
\begin{equation*}
 \Phi(z) = \big( u (z) A(z) \nabla_{\mathbb G} v(z), 0 \big) \quad \forall  z \in \partial \Omega_r(z_0).
\end{equation*}
Moreover, \eqref{nuGamma} gives $\nu(z)= - \tfrac{\nabla_{\mathbb G} \Gamma^*(z;z_0) }{|\nabla_{\mathbb G} \Gamma^*(z;z_0)|_z}$ for every $z$ such that $\nabla_{\mathbb G}\Gamma^*(z;z_0)\neq 0$, while $\Phi(z) = 0$ whenever $\nabla_{\mathbb G}\Gamma^*(z;z_0)= 0$.
We then find
\begin{equation} \label{eq-div-crit}
 \int_{\partial_{\mathbb G}^*\Omega_r(z_0) \cap \{ t<t_0-\varepsilon_k \}} 
 \!\!\!\!\!\!\!\!\!\!\!\!\!\!\!\!\!\!\! \scp{\Phi,\nu}_z 
 \beta_{\Omega_r(z_0)} 
 d{\mathcal S}_{\mathbb G}^{{\mathcal Q}-1}
= \int_{\partial_{\mathbb G}^*\Omega_r(z_0) \cap \left\{ t<t_0-\varepsilon_k \right\}} 
\!\!\!\!\!\!\!\!\!\!\!\!\!\!\!\!
u(x,t) K_{\mathbb G} (z_0;z) 
\beta_{\Omega_r(z_0)}
d{\mathcal S}_{\mathbb G}^{{\mathcal Q}-1},
\end{equation}
where 
\begin{equation*}
	K_{\mathbb G} (z_0; z) = 
	\frac{\langle A(z) \nabla_{\mathbb G} \Gamma^*(z;z_0), \nabla_{\mathbb G} \Gamma^*(z;z_0) \rangle_z }
	{|\nabla_{{\mathbb G}}\Gamma^*(z;z_0)|_z}
\end{equation*}
is the kernel defined in \eqref{e-kernels}. We next prove that $K_{\mathbb G} (z_0; \cdot)$ belongs to $L^1$ with respect to the measure ${\mathcal S}_{\mathbb G}^{{\mathcal Q}-1}\mres \partial^*_{\mathbb G}\Omega_r(z_0)$. We apply the identity \eqref{eq-div-k} to a compactly supported smooth function $u$ with the property that $u(z) = 1$ for every $z \in \overline {\Omega_r(z_0)}$. We have $\L u = c$ in $\overline {\Omega_r(z_0)}$, then \eqref{eq-div-1} and \eqref{eq-div-2} yield
\begin{equation*} 
\lim_{k \to + \infty} \int_{\partial_{\mathbb G}^*\Omega_r(z_0) \cap \left\{ t<t_0-\varepsilon_k \right\}} 
\!\!\!\!\!\!\!\!\!\!\!\!\!\!\!\!\!\!\!\!\! K_{\mathbb G} (z_0;z)  \beta_{\Omega_r(z_0)} d{\mathcal S}_{\mathbb G}^{{\mathcal Q}-1} = 1 + \int_{\Omega_r(z_0)} 
\Big(c(z) \Gamma^*(z;z_0) - \frac{1}{r} \div_{\mathbb{G}} \, b(z) \Big) d z.
\end{equation*}
Since the functions $K_{\mathbb G}$ and $\beta_{\Omega_r(z_0)}$ are both non-negative and the sequence $\big(\varepsilon_k\big)_{k \in \mathbb{N}}$ is decreasing, we conclude that 
\begin{equation*} 
\int_{\partial_{\mathbb G}^*\Omega_r(z_0)} K_{\mathbb G} (z_0;z)  \beta_{\Omega_r(z_0)} d{\mathcal S}_{\mathbb G}^{{\mathcal Q}-1}
\end{equation*}
is finite. This proves the first equality in the following  
\begin{equation} \label{eq-div-3}
\begin{split}
 \lim_{k \to + \infty} \int_{\partial_{\mathbb G}^*\Omega_r(z_0) \cap \{ t<t_0-\varepsilon_k \}} 
 \!\!\!\!\!\!\!\!\!\!\!\!\!\!\!\!\!\!\! \scp{\Phi,\nu}_z \beta_{\Omega_r(z_0)} 
d{\mathcal S}_{\mathbb G}^{{\mathcal Q}-1} 
& = \int_{\psi_r(z_0)} 
\!\!\!\!\!\!\! u(x,t) K_{\mathbb G} (z_0;z)  \beta_{\Omega_r(z_0)} 
d{\mathcal S}_{\mathbb G}^{{\mathcal Q}-1}
\\
& = \int_{\psi_r(z_0)} u(x,t) K_{\mathbb G} (z_0;z)  
d{\mathcal S}_{\mathbb G}^{{\mathcal Q}-1}
\end{split}
\end{equation}
for every $u \in C(\overline{\Omega_r(z_0)})$. In the second equality we took into account that $K_{\mathbb G} (z_0;z)=0$ if $\nabla_{\mathbb G}\Gamma^*(z)=0$ and that $\psi_r(z_0)\setminus\{\nabla_{\mathbb G}\Gamma^*=0\}$ is a $C^1_{\mathbb G}$-regular surface, hence $\beta_{\Omega_r(z_0)}=1$ there, see Remark \ref{thetacostante}. The proof of the first assertion of Theorem \ref{th-1} then follows by using \eqref{eq-div-1}, \eqref{eq-div-2} and \eqref{eq-div-3} in \eqref{eq-div-k}. 

The proof of the second assertion of Theorem \ref{th-1} is a direct consequence of the first one and of the coarea formula stated in Proposition \ref{coarea}. Indeed, fix a positive $r$ as above, multiply by $\frac{1}{r}$ 
and integrate over $]0,r[$. We find
\begin{equation} \label{e-meanvalue-step1}
\begin{split}
	\frac{1}{r} \int_0^r u(z_0) d \varrho = & 
	\frac{1}{r} \int_0^r  
	\bigg(\int_{\partial_{\mathbb G}^* \Omega_\varrho(z_0)} 
	K_{\mathbb G} (z_0;z) u(z) \, d P_{\mathbb G}(\Omega_\r(z_0))\bigg) d \varrho\, 
	\\
	& + \frac{1}{r} \int_0^r \bigg(\int_{\Omega_\r(z_0)} f (z) 
	\left( \tfrac{1}{\r} - \Gamma^*(z;z_0) \right) dz \bigg) d \varrho  
	\\
	& + \frac{1}{r} \int_0^r \tfrac{1}{\r} \bigg( 
	\int_{\Omega_\r (z_0)} \left( \div_{\mathbb G} b(z) - c(z) \right) u(z) \, dz \bigg) d \varrho.
\end{split}
\end{equation}
The left hand side of the above equality equals $u(z_0)$, while the last two terms agree with the last two terms appearing in the statement of Theorem \ref{th-1}. In order to conclude the proof we only need to show that 
\begin{equation} \label{e-meanvalue-step2}
	  \int_0^r \bigg(\int_{\partial_{\mathbb G}^* \Omega_\varrho(z_0)} 
	  K_{\mathbb G} (z_0;z) u(z) \, d P_{\mathbb G}(\Omega_\r(z_0)) \bigg) d \varrho 
	= \int_{\Omega_\r (z_0)} M_{\mathbb G} (z_0;z) u(z) dz
\end{equation}
where $M_{\mathbb G}$ is the kernel defined in \eqref{e-kernels}.
With this aim, we set 
\begin{equation*}
 E_y(z_0) := \big\{ z \in \R^{N+1} \mid \Gamma^*(z;z_0) > y \big\}, \quad y>0, 
\end{equation*}
and we substitute $y = \frac{1}{\varrho}$ in the left hand side of \eqref{e-meanvalue-step2}. Note that $\partial_{\mathbb G}^* E_y(z_0) = \partial_{\mathbb G}^* \Omega_\r(z_0)$ if $y = \frac{1}{\varrho}$ and $\Gamma^*(z;z_0)= y$ for every $z \in \partial_{\mathbb G}^* E_y(z_0)$. Then 
\begin{equation*} 
\begin{split}
	  \int_0^r &\bigg( \int_{\partial_{\mathbb G}^* \Omega_\r(z_0)} \!\!\!\!\!
 	  \frac{\langle A(z) \nabla_{\mathbb G} \Gamma^*(z;z_0), \nabla_{\mathbb G} \Gamma^*(z;z_0) \rangle_z }
 	{|\nabla_{\mathbb G} \Gamma^*(z_0;z)|_z} u(z) \, d P_{\mathbb G}(\Omega_\r(z_0)) \bigg) d \varrho 
 	\\
      & = 
     \int_{\frac{1}{r}}^{\infty} \frac{1}{y^{2}} \bigg(\int_{\partial_{\mathbb G}^* E_y(z_0)} 
     \!\!\!\!\!\!\!
  	\frac{\langle A(z) \nabla_{\mathbb G} \Gamma^*(z;z_0), \nabla_{\mathbb G} \Gamma^*(z;z_0) \rangle_z }
  	{|\nabla_{\mathbb G}\Gamma^*(z;z_0)|_z}u(z) \, d P_{\mathbb G}(E_y(z_0)) \bigg)  dy
  	\\
	& =\int_{\frac{1}{r}}^{\infty} \bigg(\int_{\partial_{\mathbb G}^* {E_y(z_0)}}
	\frac{\langle A(z) \nabla_{\mathbb G} \Gamma^*(z;z_0), \nabla_{\mathbb G} \Gamma^*(z;z_0) \rangle_z }
	{\Gamma^*(z;z_0)^2 {|\nabla_{\mathbb G}\Gamma^*(z;z_0)|_z}} u(z) \, d P_{\mathbb G}(E_y(z_0)) \bigg) d y.
\end{split}
\end{equation*}
We finally recall the definition of the kernel $M_{\mathbb G}$ and we conclude the proof of \eqref{e-meanvalue-step2} by using the coarea formula stated in Proposition \ref{coarea}.
\end{proof}

\medskip

\begin{proof} {\sc of Theorem \ref{Thm-smp}.}  
We first note that $\L \, 1 = c$, then Theorem \ref{th-1} yields
\begin{equation*} 
\begin{split}
  	\frac{1}{\r} \int_{\Omega_\r(z_1)} M_{\mathbb G} (z_1; z) \, dz + \frac{1}{\r} \int_0^{\r} 
	\Bigl(& \tfrac{1}{s} \int_{\Omega_s (z_1)}  \left( \div_{\mathbb G} \, b(z) - c(z) \right) \, dz \Big) d s  \\
	 &+ \int_{\Omega_s (z_1)} c (z) \left( \tfrac{1}{s} - \Gamma^*(z;z_1) \right) dz \Bigr)\, d s = 1
\end{split}
\end{equation*}
for every $z_1 \in \Omega$ and $\r >0$ such that $\overline{\Omega_\varrho(z_1)} \subset \Omega$.

We claim that for every $z_1\in\O$ such that $u(z_1) = \max_\Omega u$ we have 
\begin{equation} \label{eq-claim-smp}
 u(z) = u(z_1) \qquad \text{for every} \quad z \in \overline{\Omega_\varrho(z_1)}.
\end{equation}
By using again Theorem \ref{th-1} and the above identity we obtain
 \begin{align*} 
	0 = & \frac{1}{\varrho} \int_{\Omega_\varrho(z_1)} M_{\mathbb G} (z_1; z) \big(u(z)- u(z_1)\big) \, dz 
	\\
	& + \frac{1}{\r} \int_0^{\r} \tfrac{1}{s}  \Big( \int_{\Omega_s (z_1)} 
	\left( \div_{\mathbb G} \, b(z) - c(z) \right) \big(u(z)- u(z_1)\big) \, dz \Big) d s 
	\\
	& + \frac{1}{\r} \int_0^{\r} \left(
	\int_{\Omega_s (z_1)} (f (z) - u(z_1) c(z)) \left( \tfrac{1}{s} - \Gamma^*(z;z_1) \right) dz \right) d s.
\end{align*}
Note that $c \le 0, f \ge 0, \div_{\mathbb G} \, b(z) - c(z)> 0$ and $u(z) \le u(z_1)$, being $u(z_1) = \max_{\Omega} u \ge 0$. Moreover, $M_{\mathbb G}(z_1;z) \ge 0$ and $\Gamma^*(z;z_1) > \tfrac{1}{s}$ for every $z \in \Omega_s(z_1)$, then
 \begin{align*} 
	0 \ge & \frac{1}{\varrho} \int_{\Omega_\varrho(z_1)} M_{\mathbb G} (z_1; z) \big(u(z)- u(z_1)\big) \, dz 
	\\
	0 \ge &  \frac{1}{\r} \int_0^{\r} \tfrac{1}{s} \Big( \int_{\Omega_s (z_1)} 
	\left( \div_{\mathbb G} \, b(z) - c(z) \right) \big(u(z)- u(z_1)\big) \, dz \Big) d s 
	\\
	0 \ge & \frac{1}{\r} \int_0^{\r} \left(
	\int_{\Omega_s (z_1)} (f (z) - u(z_1) c(z)) \left( \tfrac{1}{s} - \Gamma^*(z;z_1) \right) dz \right) d s.
\end{align*}
Hence the three integral vanish and, as a consequence, 
$\left( \div_{\mathbb G} \, b(z) - c(z) \right) \big((u(z)- u(z_1)\big)=0$ for $\lambda_{N+1}$ almost every $z \in \Omega_\r(z_1)$. Because our assumption on the sign of $\div_{\mathbb{G}} \, b - c$ we have that 
$u(z)= u(z_1)$ for $\lambda^{N+1}$ almost every $z \in \Omega_\r(z_1)$, and \eqref{eq-claim-smp} follows from the continuity of $u$.

We are in position to conclude the proof of Theorem \ref{Thm-smp}. Let $z$ be a point of $\AS  ( \O )$, and let $\gamma: [0,T] \to \O$ be an $\L$-admissible path such that $\g(0)= z_0$ and $\g(T) = z$. We prove that $u(\gamma(t)) = u(z_0)$ for every $t \in [0,T]$. Let
\begin{equation*}
 I := \big\{ t \in [0,T] \mid u(\gamma(s)) = u(z_0) \ \text{for every} \ s \in[0,t] \big\}, \qquad \overline t := \sup I.
\end{equation*}
Clearly, $I \ne \emptyset$ as $0 \in I$. Moreover $I$ is closed, because of the continuity of $u$ and $\gamma$, then $\overline t \in I$. We now prove by contradiction that $\overline t = T$. Indeed, if $\overline t < T$, then we let $z_1 := \gamma(\overline t)$, and we note that $z_1 \in \Omega$, $u(z_1) = \max_\Omega u$. Moreover, there exist $r_1>0$ such that $\overline{\Omega_{r_1}(z_1)} \subset \O$ and, by condition [H.4], a positive $s_1$ such that $ \gamma(\overline t + s) \in \Omega_{r_1}(z_1)$ for every $s \in [0, s_1[$. 
As a consequence of \eqref{eq-claim-smp} we obtain $u(\gamma(\overline t + s)) = u(z_1) = u(z_0)$ for every $s \in [0, s_1[$, and this contradicts the assumption $\overline t < T$. This proves that $u(z) = u(z_0)$ for every $z \in \AS  ( \O )$.  By the continuity of $u$ we conclude that $u(z) = u(z_0)$ for every $z \in \overline{\AS  ( \O )}$. Eventually, since $u$ is constant in $\overline{\AS  ( \O )}$, we conclude that $f(z) = \L u (z) = u(z_0) \, c(z)$ for every $z \in \overline{\AS  ( \O )}$. 

%
%

We finally remark that the condition on the sign of $u(z_0)$ was used only to guarentee that $u(z_0) c(z)$ has the required sign. If we assume $c =0$, the needed condition is always satisfied, and we conclude that $f=0$. 
\end{proof}


\setcounter{equation}{0} 

\section{Examples}\label{SecExamples}

In this section we list several examples of well-known and important operators verifying the hypotheses of our results, that basically rely on a suitable group structure and on the existence and the properties of the fundamental solution. As said in the Introduction, we warn the reader that the natural dilation operator used in the regularity theory is not the same we use here to prove mean value formulas, and in each example both are described. The homogeneous Lie groups in the examples we are going to present are the Carnot groups and the Kolmogorov groups. We refer to the monograph \cite{LibroBLU} for a detailed treatment of the subject of Carnot groups, and to the survey article \cite{AnceschiPolidoro} for the homogeneous Kolmogorov groups. Next, we check that our hypotheses on the fundamental solutions hold true, recalling in each case the relevant known results on existence and estimates. Concerning the problem of its existence, we recall that Levi's parametrix method provides us with the existence of a fundamental solution for non-divergence operators with H\"older continuous coefficients in the setting of uniformly elliptic and parabolic operators. This method has been extended to the case of \emph{heat operators on Carnot groups} whose prototypical case is the Heisenberg group in Example \ref{ex-1} and to \emph{degenerate Kolmogorov operators,} described in Example \ref{ex-2}, which are two large classes of operators to which our results apply. 

\begin{example} \label{ex-1} {\sc Sublaplacian and parabolic operator on the Heisenberg group.} 
{\rm The Heisenberg group $\mathbb{H}^n = (\R^{2n+1}, \cdot)$ is defined by the composition law
\begin{equation} \label{e-Heisenberg}
 (x,y,s) \cdot (x',y',s') = \Big( x + x', y + y', s + s' + 2 \sum_{j=1}^n (x_j' y_j - x_j y_j') \Big).
\end{equation}
The dilation 
\begin{equation} \label{e-dil-Heisenberg}
 \tilde\d_\lambda (x,y,s) = \big( \lambda x, \lambda y, \lambda^2 s  \big)
\end{equation}
is an automorphism of $\mathbb{H}^n$ and induces a direct sum decomposition on $\R^{2n+1}$
\begin{equation}\label{e-oplus-Heisenberg}
    \R^{2n+1}=V_{1}\oplus V_{2},
\end{equation}
where $V_1 = \{ (x,y,0) \mid x,y, \in \R^n \}$ and $V_2 = \{ = (0,0,s) \mid s \in \R \}$. The vector fields
\begin{equation}\label{e-vectorfields-Heisenberg}
    X_j = \partial_{x_j} + 2 y_j \partial_s, \quad Y_j = \partial_{y_j} - 2 x_j \partial_s, \quad j=1, \dots,n,
\end{equation}
are left-invariant on $(\mathbb{H}^n, \cdot)$. Moreover 
\begin{equation*}
    [X_j, Y_j] = -4 \partial_s, \quad j=1, \dots,n,
\end{equation*}
while 
\begin{equation*}
    [X_j, X_k] = [Y_j, Y_k] = [X_j, Y_k] = 0, \quad j,k=1, \dots,n.
\end{equation*}
In particular, $X_1, \dots, X_n, Y_1, \dots, Y_n$ evaluated at $(x,y,s) = (0,0,0)$ is a basis of $V_1$ and the Lie algebra generated
by $X_1, \dots, X_n, Y_1, \dots, Y_n$, evaluated at any point of $\R^{2n+1}$, agrees with $\R^{2n+1}$. The homogeneous dimension of the Heisenberg group is $\mathcal{Q}_H = 2n+2$. The differential operator
\begin{equation}\label{e-sublaplacian-Heisenberg}
    \Delta_{\mathbb{H}^n} := \sum_{j=1}^n \big(X_j^2 + Y_j^2\big),
\end{equation}
is said \emph{sub-Laplacian} on $\mathbb{H}^n$.

The homogeneous Lie group $\mathbb{G} = \left( \R^{2n+2}, \circ, (\d_\lambda)_{\lambda>0} \right)$ relevant to the heat operator on the Heisenberg group,
\begin{equation}\label{e-heat-Heisenberg}
    \Heat_0 := \sum_{j=1}^n \big(X_j^2 + Y_j^2\big) - \partial_t,
\end{equation}
is defined by the composition 
\begin{equation} \label{e-heat-Group-Heisenberg}
 (x,y,s,t) \circ (x',y',s',t') = \Big( (x,y,s) \cdot (x'y',s'), t+ t' \Big), 
\end{equation}
and by the dilation 
\begin{equation} \label{e-dil-heat-Heisenberg-1}
 \d_\lambda (x,y,s,t) = \big( \tilde\d_\lambda (x,y,s), \lambda t  \big).
\end{equation}  
Here $W_1 = \{ (x,y,0,t) \mid x,y, \in \R^n, t \in \R \}$ and $W_2 = \{ = (0,0,s,0) \mid s \in \R \}$, and the homogeneous dimension is $\mathcal{Q} = \mathcal{Q}_H + 1$.
Note that the \emph{parabolic scaling} 
\begin{equation} \label{e-dil-heat-Heisenberg}
 \hat \d_\lambda (x,y,s,t) = \big( \tilde\d_\lambda (x,y,s), \lambda^2 t  \big)
\end{equation} 
is commonly used in the regularity theory for the solutions to $\Heat_0 u = f$. In this framework, the vector fields $X_1, \dots, X_n, Y_1, \dots, Y_n$ are homogeneous of degree $1$ with respect to the dilation $(\d_\lambda)_{\lambda>0}$, while the derivative $\partial_t$ is homogeneous of degree $2$, as usual in the case of parabolic operators. The homogeneous dimension of the group, with respect to $\big( \hat \d_\lambda \big)_{\lambda >0}$, is $\mathcal{Q}_H + 2$, because of the different role played by the time variable $t$. 
\hfill $\square$
}\end{example}

As said above, the Heisenberg group is the prototype of Carnot groups $\mathbb{C} = (\R^{N}, \cdot, \tilde \delta_\lambda)$. The Lie group $\mathbb{G} = \left( \R^{N+1}, \circ, (\d_\lambda)_{\lambda>0} \right)$ relevant to the heat operator on a Carnot group is defined by the operations 
\begin{equation} \label{e-heat-Group-Carnot}
 (x,t) \circ (x',t') = \big( x \cdot x', t+ t' \big), \qquad  
 \d_\lambda (x,t) = \big( \tilde\d_\lambda x, \lambda t  \big),
\end{equation}
being
\begin{equation} \label{e-tildedelta}
  \tilde\delta_\lambda = \diag ( \lambda \I_{m_1} , \lambda^2 \I_{m_2}, \ldots, \lambda^{\mu} \I_{m_\mu}).
\end{equation}
The homogeneous dimension of the Carnot group $\mathbb{C}$ is $\mathcal{Q}_C = m_{1} + 2 m_{2} + \dots + \mu m_{\mu}$, and the homogeneous dimension of $\mathbb{G}$ defined in \eqref{e-Q} is $\mathcal{Q}_C+1$. 

If we consider the direct sum decomposition \eqref{e-Oplus-bis}, we let $m = m_1$ and we choose a family of left-invariant vector fields $X_1, \dots ,X_m$ that form a basis of $W_1$, with the property that $X_1(0), \dots X_m(0)$ are orthonormal. The sub-Laplacian on $\mathbb{C}$ and the heat operator are then defined as
\begin{equation}\label{e-sublaplacian-Carnot}
    \Delta_{\mathbb{C}} := \sum_{j=1}^m X_j^2, \qquad \Heat_0 = \Delta_{\mathbb{C}} - \partial_t.
\end{equation}
Also in this case, the parabolic dilation
\begin{equation} \label{e-dil-heat-Carnot}
 \hat \d_\lambda (x,t) = \big( \tilde\d_\lambda (x), \lambda^2 t  \big)
\end{equation} 
is useful in the regularity theory relevant to $\Heat_0$, and $\mathcal{Q}_C+2$ is the homogeneous dimension of $\mathbb{G}$ with respect to $\big( \hat \d_\lambda \big)_{\lambda >0}$.

We next quote an existence result proved by Bonfiglioli, Lanconelli and Uguzzoni \cite{BLU2}, which applies to parabolic and elliptic operators on Carnot groups. In the next statement $d_\infty(x,y)$ denotes the distance of $x$ and $\xi \in \R^N$, which agrees with $d_\infty((\xi,0),(x,0))$. Note that the regularity assumption made in \cite{BLU2} is written in terms of the parabolic distance. Indeed, in \cite{BLU2} the following condition 
\begin{equation*} 
 |u(x,t)-u(\xi,\tau)| \le M_1 \big(d_\infty (x,\xi) + |t-\tau|^{1/2}\big)^\beta, \quad \text{for every} \ (x,t),(\xi,\tau) \in \Omega,
\end{equation*}
is required on the coefficients of the operator. However, this condition follows from \eqref{eq-C-alpha} if we choose $\alpha = \beta / 2$, since the coefficients are bounded.

\begin{theorem} \label{th-Gamma-Heat} {\rm (Theorem 1.2 in \cite{BLU2})}
Let $X_1, \dots X_m$ be vector fields that satisfy the H\"ormander condition {\rm [H.1]} and that generate a Carnot group $\mathbb{C} = (\R^{N}, \cdot, \tilde \delta_\lambda)$, and let $\mathcal{Q}_C$ be its homogeneous dimension. Consider the differential operator
\begin{equation}\label{eq-Heat}
     \Heat u : = \sum_{i,j=1}^m X_i \left( a_{ij} X_j u \right) + 2 \sum_{i,j=1}^m X_i a_{ij} X_j u + 
     \sum_{i,j=1}^m X_i X_j a_{ij} u - \partial_t u,
\end{equation}
where $A = (a_{ij})_{i,j=1, \dots,m}$ is a symmetric matrix satisfying the condition \eqref{elliptic} for some constant $\Lambda \ge 1$. Suppose that, for every $i, j = 1, \dots, m$, the coefficients $a_{ij}$ and their derivatives $X_i a_{ij}$ and $X_i X_j a_{ij}$ belong to the space $C_{\mathbb G}^{\alpha}(\R^{N+1})$ for some $\alpha \in ]0,1]$. Then there exists a fundamental solution $\G^*$ of the adjoint operator 
\begin{equation*} 
     \Heat^* = \sum_{i,j=1}^m a_{ij} X_i X_j + \partial_t. 
\end{equation*}
Moreover, $\G^*$ satisfies the following estimates: for every positive $T$ there exist two positive constants $M$ only depending on $\Heat$, and $C(T)$, also depending on $T$, such that 
\begin{equation}\label{eq-H-bds}
 0 \le \G^*((\x,\t),(x,t)) \le \frac{C(T)}{(t-\t)^{\mathcal{Q}_C/2}} 
 \exp \left( -\frac{d_\infty^2(\x,x)}{M(t-\t)} \right), 
\end{equation}
for every $(\x,\t),(x,t) \in \R^{N+1}$ with $0 < t - \t \le T$. 
Moreover, there exist $M_0, K_0, T_0>0$ such that
\begin{equation}\label{eq-H-lbds}
 \G^*((\x,\t),(x,t)) \ge \frac{c(T_0)}{(t-\t)^{\mathcal{Q}_C/2}} \exp \left( -M_0 \frac{d_\infty^2(\x,x)}{t-\t} \right), 
\end{equation}
for every $(\x,\t),(x,t) \in \R^{N+1}$ with $0 < t - \t \le T_0$ such that 
$M_0 \frac{d_\infty^2(\x,x)}{t-\t} \le - \log \left( K_0 (t-\t)^{\mathcal{Q}_C/2}\right)$.
\end{theorem}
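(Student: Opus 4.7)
The plan is to adapt Levi's parametrix method to the stratified setting, which is the standard route to existence and two-sided Gaussian bounds for fundamental solutions of heat-type operators with H\"older continuous coefficients. The construction proceeds in three conceptual steps: build a parametrix by freezing the matrix $A$ at one point, derive a Volterra integral equation for the correction, and solve it by iteration using the H\"older regularity of the coefficients. Throughout, the $\hat\delta_\lambda$-homogeneity of $\mathbb G$ and the left-invariance of $X_1,\dots,X_m$ are what make the analysis tractable.

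For the parametrix, fix an auxiliary point $\zeta_0\in\R^{N+1}$ and consider the frozen adjoint operator
\begin{equation*}
\Heat^{*}_{\zeta_0} u = \sum_{i,j=1}^m a_{ij}(\zeta_0)\,X_i X_j u + \partial_t u.
\end{equation*}
Since $A(\zeta_0)$ is symmetric and positive definite, a linear change of frame in $W_1$ (compatible with the stratification) reduces this to a left-invariant, $\hat\delta_\lambda$-homogeneous sub-Laplacian heat operator on $\mathbb G$. Folland's heat semigroup construction on stratified groups then yields a smooth, positive, homogeneous fundamental solution $\Gamma^{*}_{\zeta_0}(\xi,\tau;x,t)$ satisfying two-sided Gaussian bounds relative to $d_\infty$ and the homogeneous dimension $\mathcal{Q}_C$, together with analogous estimates for $X_i\Gamma^{*}_{\zeta_0}$ and $X_iX_j\Gamma^{*}_{\zeta_0}$ gaining a factor $(t-\tau)^{-1/2}$ and $(t-\tau)^{-1}$ respectively, with constants depending only on $\Lambda$ and $\mathbb G$.

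Next, I would seek the fundamental solution in Levi form
\begin{equation*}
\Gamma^{*}(\zeta;z) = \Gamma^{*}_{z}(\zeta;z) + \int_\tau^{t}\!\!\int_{\R^N} \Gamma^{*}_{w}(\zeta;w)\,\Phi(w;z)\,dw,
\end{equation*}
and impose $\Heat^{*}\Gamma^{*}(\cdot;z)=0$. Exploiting $\Heat^{*}_{w}\Gamma^{*}_{w}(\cdot;w)=0$ and the hypothesis $a_{ij}, X_i a_{ij}, X_iX_j a_{ij}\in C^\alpha_{\mathbb G}$ yields a Volterra integral equation $\Phi = J + \Phi\ast J$ whose kernel $J = (\Heat^{*}-\Heat^{*}_w)\Gamma^{*}_w$ only involves differences of coefficients and therefore satisfies
\begin{equation*}
|J(w;z)| \le \frac{C\,d_\infty(w,z)^\alpha}{(t-\tau)^{1+\mathcal{Q}_C/2}}\,e^{-d_\infty^2(w,z)/(M(t-\tau))} \le \frac{C'}{(t-\tau)^{1-\alpha/2+\mathcal{Q}_C/2}}\,e^{-d_\infty^2(w,z)/(M'(t-\tau))}.
\end{equation*}
Solving by the Neumann series $\Phi=\sum_{n\ge 1}J_n$ with $J_{n+1}=J_n\ast J$ and then adding back the parametrix produces a candidate $\Gamma^{*}$; the delta-like behavior required by property (2) of Definition \ref{def-fund-sol} comes entirely from the parametrix piece.

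The main technical obstacle is the iterated convolution estimate used to sum the Neumann series: one must control
\begin{equation*}
\int_\tau^{t}\!\!\int_{\R^N} \frac{e^{-c d_\infty^2(\xi,w)/(s-\tau)}}{(s-\tau)^{\mathcal{Q}_C/2}}\cdot \frac{d_\infty^\alpha(w,x)\,e^{-c d_\infty^2(w,x)/(t-s)}}{(t-s)^{1+\mathcal{Q}_C/2-\alpha/2}}\,dw\,ds,
\end{equation*}
and recover the same Gaussian profile with time singularity improved by $(t-\tau)^{\alpha/2}$ at each iteration, obtaining a factor of the form $C^n/\Gamma(1+n\alpha/2)$. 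This requires the pseudo-triangle inequality for $d_\infty$, which splits $d_\infty^2(\xi,x)\lesssim d_\infty^2(\xi,w)+d_\infty^2(w,x)$ in the exponent, combined with careful Beta-function bookkeeping in the time variable. Once this is in place, \eqref{eq-H-bds} is immediate. For the lower bound \eqref{eq-H-lbds}, in the near-diagonal regime $d_\infty^2(\xi,x)/(t-\tau)\le -\log(K_0(t-\tau)^{\mathcal{Q}_C/2})$ the frozen-coefficient lower bound dominates the Levi correction by a factor $(t-\tau)^{\alpha/2}$, which absorbs into the allowed decay and yields the claimed Gaussian lower estimate.
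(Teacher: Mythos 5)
This theorem is not proved in the paper: it is quoted verbatim from \cite{BLU2} (Theorem 1.2 there), and the only argument the paper itself supplies is the observation in Remark \ref{rem-Gamma-Heat} that the lower bound \eqref{eq-H-lbds} follows from inequalities (2.2) and (2.15) of that reference. Your outline is a faithful sketch of the Levi parametrix construction that \cite{BLU2} actually carries out, and your treatment of the lower bound (frozen-coefficient Gaussian lower bound dominating the Levi correction, which is smaller by a factor $(t-\tau)^{\alpha/2}$ in the near-diagonal regime) is exactly the content of the paper's remark; so in substance you are reproducing the cited proof rather than offering an alternative. Two points in your sketch deserve more care if it were to be expanded. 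First, a linear change of the horizontal frame does not in general conjugate the frozen operator $\sum_{i,j} a_{ij}(\zeta_0)X_iX_j+\partial_t$ to the sub-Laplacian heat operator, because a linear map of the first layer need not extend to a group automorphism; what is true, and what suffices, is that the new frame is still a left-invariant, $\delta_\lambda$-homogeneous H\"ormander system spanning $W_1$, so Folland-type heat kernel theory applies to it directly. Second, the Gaussian constants for the frozen kernels must be \emph{uniform} over the compact family of matrices satisfying \eqref{elliptic}; this uniformity is a genuine (compactness) step in \cite{BLU2} and is needed before the Neumann series can be summed with constants depending only on $\Lambda$ and $\mathbb{G}$. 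With those two caveats your proposal is a correct account of the standard proof.
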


\begin{remark}  \label{rem-Gamma-Heat} {\rm Even though the inequality \eqref{eq-H-lbds} is not explicitly written in \cite{BLU2}, it is a direct consequence of the inequalities (2.2) and (2.15) therein. Moreover, bounds analogous to \eqref{eq-H-bds} hold for the derivatives $\left| X^*_j \G^*((\x,\t),(x,t)) \right|, \left| X^*_i X^*_j \G^*((\x,\t),(x,t)) \right|, i,j = 1, \dots m$ and for $\left| \partial_\tau \G^*((\x,\t),(x,t)) \right|$. We expect that the assumptions on the coefficients of the lower order terms of the operator $\Heat$ can be relaxed.}
\end{remark}

The following result ensures that the mean value formulas stated in Theorem \ref{th-1} hold for every operator $\Heat$ satisfying the assumptions of Theorem \ref{th-Gamma-Heat}.

\begin{proposition} \label{prop-Gamma-Heat-1} Under the assumption of Theorem \ref{th-Gamma-Heat}, there exists a positive $r_0$ such that \eqref{eq-claim-1} and \eqref{eq-claim-2} hold for every $z_0 \in \R^{N+1}$. 
\end{proposition}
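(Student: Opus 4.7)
The plan is to exploit the Gaussian upper bound \eqref{eq-H-bds} together with the homogeneity of the Carnot group $\mathbb{C} = (\R^N, \cdot, \tilde\delta_\lambda)$. Throughout, let $d_\infty^{\mathbb{C}}$ denote the homogeneous norm on $\R^N$ associated with the Carnot dilation $\tilde\delta_\lambda$; note that for fixed $t=t_0-\varepsilon$ the function $x \mapsto d_\infty((x,t_0-\varepsilon),(x_0,t_0))$ dominates $d_\infty^{\mathbb{C}}(x_0,x)$, which is exactly the distance appearing in \eqref{eq-H-bds}. The key observation is that, under a rescaling $x = x_0 \cdot \tilde\delta_{\sqrt\varepsilon}(y)$, the right-hand side of \eqref{eq-H-bds} becomes $C(T)\exp(-d_\infty^{\mathbb{C}}(0,y)^2/M)$, and the Jacobian $\varepsilon^{\mathcal{Q}_C/2}$ (by \eqref{e-Q}) cancels with the prefactor.

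For \eqref{eq-claim-1}, I fix $T=1$ and choose $r_0 > 0$ such that $(C(1)r_0)^{2/\mathcal{Q}_C} \le 1$. If $z=(x,t) \in \Omega_r(z_0)$ with $r \le r_0$, then \eqref{eq-H-bds} forces
\[
(t_0-t)^{\mathcal{Q}_C/2}\exp\!\Bigl(\tfrac{d_\infty^{\mathbb{C}}(x_0,x)^2}{M(t_0-t)}\Bigr) \le C(1)\,r ,
\]
which first yields $t_0-t \le (C(1)r)^{2/\mathcal{Q}_C} \le 1$ (hence \eqref{eq-H-bds} is applicable), and then
\[
d_\infty^{\mathbb{C}}(x_0,x)^2 \le M(t_0-t)\log\frac{C(1)r}{(t_0-t)^{\mathcal{Q}_C/2}} \le C_r ,
\]
the latter being bounded on $(0,(C(1)r)^{2/\mathcal{Q}_C}]$ since $s\log(A/s^a)$ is. Properness of $d_\infty^{\mathbb{C}}$ with respect to the Euclidean norm (see \eqref{e-dist-eucl}) then gives boundedness of $x$, and thus $\Omega_r(z_0)$ is bounded.

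For the first assertion in \eqref{eq-claim-2}, the same inversion of \eqref{eq-H-bds} shows that
\[
\mathcal{I}_{r,\varepsilon}(z_0) \subset \bigl\{x \in \R^N : d_\infty^{\mathbb{C}}(x_0,x) \le \rho(\varepsilon)\bigr\},\qquad
\rho(\varepsilon)^2 := M\varepsilon \log\frac{C(T)\,r}{\varepsilon^{\mathcal{Q}_C/2}},
\]
which is meaningful and vanishes as $\varepsilon \to 0$. By translation invariance and the homogeneity of $\tilde\delta_\lambda$, the Lebesgue measure (equal to $\mathscr{H}_e^N$) of a $d_\infty^{\mathbb{C}}$-ball of radius $\rho$ is $c\,\rho^{\mathcal{Q}_C}$, hence $\mathscr{H}_e^N(\mathcal{I}_{r,\varepsilon}(z_0)) \le c\,\rho(\varepsilon)^{\mathcal{Q}_C} \to 0$.

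For the second assertion in \eqref{eq-claim-2}, I split the integral at an auxiliary radius $R>0$. On the ``far'' region I use \eqref{eq-H-bds} and the change of variables $x = x_0 \cdot \tilde\delta_{\sqrt\varepsilon}(y)$ to get
\[
\int_{\{d_\infty^{\mathbb{C}}(x_0,x) \ge R\}} \Gamma^*(x,t_0-\varepsilon;z_0)\,dx \le C(T)\!\int_{\{d_\infty^{\mathbb{C}}(0,y) \ge R/\sqrt\varepsilon\}}\!\!\exp\!\Bigl(-\tfrac{d_\infty^{\mathbb{C}}(0,y)^2}{M}\Bigr)dy ,
\]
whose integrand is globally integrable (the Carnot exponential moment is finite by the ball-volume bound) and whose integration domain exhausts to the empty set as $\varepsilon \to 0$, so this tends to $0$ for each fixed $R$. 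On the ``near'' region the pointwise bound $\Gamma^* \le 1/r$ holds on $\R^N \setminus \mathcal{I}_{r,\varepsilon}(z_0)$, and the Euclidean measure of the $d_\infty^{\mathbb{C}}$-ball of radius $R$ is $\le c R^{\mathcal{Q}_C}$. Therefore
\[
\int_{\R^N \setminus \mathcal{I}_{r,\varepsilon}(z_0)} \Gamma^* \, dx \le \int_{\{d_\infty^{\mathbb{C}}\ge R\}}\Gamma^*\,dx + \tfrac{c}{r}R^{\mathcal{Q}_C} ;
\]
letting $\varepsilon \to 0$ first (killing the first term) and then $R \to 0$ concludes the proof. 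The only delicate point is the integrability of the global Gaussian $\exp(-d_\infty^{\mathbb{C}}(0,y)^2/M)$ with respect to Lebesgue measure on $\R^N$; this follows by layer-cake from the volume estimate of $d_\infty^{\mathbb{C}}$-balls coming from the homogeneity of $\tilde\delta_\lambda$.
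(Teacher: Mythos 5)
Your proof is correct, and for the boundedness claim \eqref{eq-claim-1} and the first limit in \eqref{eq-claim-2} it coincides with the paper's argument: both invert the Gaussian upper bound \eqref{eq-H-bds} to trap $\Omega_r(z_0)$ and $\mathcal{I}_{r,\varepsilon}(z_0)$ inside sets whose size is controlled by $s\mapsto s\log\bigl(C(T)r/s^{\mathcal{Q}_C/2}\bigr)$. Where you genuinely diverge is the second limit in \eqref{eq-claim-2}. The paper uses the Gaussian \emph{lower} bound \eqref{eq-H-lbds} to show that $\mathcal{I}_{r,\varepsilon}(z_0)$ \emph{contains} a ball of radius $\sqrt{(\varepsilon/M_0)\log(c(T_0)r/\varepsilon^{\mathcal{Q}_C/2})}$; after the rescaling $y=\delta_{1/\sqrt{\varepsilon}}(x_0^{-1}\circ x)$ the complement of that ball escapes to infinity, and the tail of the (integrable) group Gaussian vanishes. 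You instead split at a fixed radius $R$: on the far region you use only the upper bound and the same rescaling (the tail vanishes because $R/\sqrt{\varepsilon}\to\infty$), while on the near region you observe that on $\R^N\setminus\mathcal{I}_{r,\varepsilon}(z_0)$ the integrand satisfies $\Gamma^*\le 1/r$ \emph{by the definition} \eqref{eq-Irepsilon}, so that piece is at most $cR^{\mathcal{Q}_C}/r$ uniformly in $\varepsilon$; sending $\varepsilon\to 0$ and then $R\to 0$ finishes. This buys something real: your argument never invokes \eqref{eq-H-lbds}, which the paper itself admits is not explicitly stated in \cite{BLU2} and must be reassembled from inequalities there (Remark \ref{rem-Gamma-Heat}); so your route would apply to any fundamental solution with only a one-sided Gaussian bound, at the harmless cost of a two-parameter limit. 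The supporting facts you rely on --- integrability of $\exp(-d_\infty^2(0,\cdot)/M)$ and the volume bound $c\rho^{\mathcal{Q}_C}$ for homogeneous balls --- do follow from \eqref{e-Q}, \eqref{e-dist-INV} and the layer-cake formula as you indicate. The only caveat, shared with the paper's own proof, is that \eqref{eq-H-bds} is stated only for $0<t_0-t\le T$, so the inclusion for $\Omega_r(z_0)$ strictly speaking only controls its intersection with the strip $\{t_0-T<t<t_0\}$; excluding points with $t_0-t>T$ needs a word (e.g.\ applying the bound with a larger time horizon), but since the paper glosses over the same point I would not count it against you.
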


\begin{proof}
Fix a positive $T$ and let $M$ and $C(T)$ be the constants appearing in the bound \eqref{eq-H-bds}. If we choose $r_0$ such that $r_0 \le T^{\mathcal{Q}_C/2}/C(T)$, we find
\begin{equation*}
\begin{split}
 \Omega_r(x_0,t_0) & \subset \left\{ (x,t) \in \R^{N} \times ]t_0 - T, t_0[ \mid d_\infty^2(x,x_0) < M (t_0-t) 
 \log \left( \tfrac{C(T) r}{(t_0-t)^{\mathcal{Q}_C/2}}\right) \right\}, \\
 \mathcal{I}_{r, \varepsilon}(x_0,t_0) & \subset \left\{ x \in \R^{N} \mid d_\infty^2(x,x_0) < M \varepsilon 
 \log \left( \tfrac{C(T) r}{\varepsilon^{\mathcal{Q}_C/2}}\right) \right\},
\end{split}
\end{equation*}
for every $(x_0,t_0) \in \R^{N+1}, r \in ]0,r_0]$ and $\varepsilon \in ]0,T]$. This proves \eqref{eq-claim-1} and the first assertion in \eqref{eq-claim-2}. On the other hand, from the inequality \eqref{eq-H-lbds} it follows that 
\begin{equation*}
 \mathcal{I}_{r, \varepsilon}(x_0,t_0) \supset \left\{ x \in \R^{N} \mid d_\infty^2(x,x_0) < \tfrac{\varepsilon}{M_0} 
 \log \left( \tfrac{c(T_0) r}{\varepsilon^{\mathcal{Q}_C/2}}\right) \right\},
\end{equation*}
for every $\varepsilon \in ]0,T_0]$, then
\begin{equation*}
\begin{split}
0 < \int_{\R^N \backslash \mathcal{I}_{r, \varepsilon} (x_0,t_0)} & \Gamma^*(x,t_0 - \varepsilon; x_0,t_0) d x \\
& \le \frac{C(T)}{\varepsilon^{\mathcal{Q}_C/2}} \int_{\left\{ x \in \R^{N} \mid d_\infty^2(x,x_0) \ge \tfrac{\varepsilon}{M_0} 
 \log \left( \tfrac{c(T_0) r}{\varepsilon^{\mathcal{Q}_C/2}}\right) \right\}} 
 \exp \left( -\frac{d_\infty^2(x,x_0)}{M \varepsilon} \right) dx \\
 & = C(T) \int_{\left\{ y \in \R^{N} \mid d_\infty^2(y,0) \ge \tfrac{1}{M_0} 
 \log \left( \tfrac{c(T_0) r}{\varepsilon^{\mathcal{Q}_C/2}}\right) \right\}} 
 \exp \left( -\frac{d_\infty^2(y,0)}{M } \right) dy.
 \end{split} 
\end{equation*}
Note that the quantities in the last two lines agree because of the change of variable $y = \delta_\lambda (x_0^{-1} \circ x)$ with $\lambda = \frac{1}{\sqrt{\varepsilon}}$ and the property \eqref{e-dist-INV} of the distance $d_\infty$. The last integral vanishes as $\varepsilon \to 0$ and this concludes the proof. 
\end{proof}

\begin{proposition} \label{prop-Gamma-Heat-2} Under the assumption of Theorem \ref{th-Gamma-Heat}, the operator $\Heat$ defined in \eqref{eq-Heat} satisfies condition {\rm [H.4]}. 
\end{proposition}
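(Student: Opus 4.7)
The plan is to verify that any $\Heat$-admissible curve enters the superlevel set $\Omega_r(z_0)$ immediately, by exploiting the two-sided Gaussian estimate \eqref{eq-H-lbds} from Theorem \ref{th-Gamma-Heat} together with the standard length estimate for horizontal curves.

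First I would unpack condition [H.4] for the heat operator on a Carnot group. Since $X_0 = 0$ and $X_{m+1} = -\partial_t$, writing $\g(s) = (x(s), t(s))$ with $\g(0) = z_0 = (x_0,t_0)$, the admissibility condition becomes $\dot x(s) = \sum_{j=1}^m \omega_j(s) X_j(x(s))$ and $\dot t(s) = -1$, so $t(s) = t_0 - s$. Consequently, evaluating $\Gamma^*$ along the curve amounts to $\Gamma^*(\g(s); z_0) = \Gamma^*(x(s), t_0-s;\, x_0, t_0)$, which sits in the regime $t-\tau = s > 0$ of the fundamental solution.

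Next I would apply the lower bound \eqref{eq-H-lbds}. For $s \in ]0,T_0]$ small enough, the admissibility constraint $M_0\, d_\infty^2(x(s),x_0)/s \le -\log(K_0 s^{\mathcal{Q}_C/2})$ will be satisfied because the right-hand side diverges to $+\infty$ while the left-hand side tends to $0$ — this is the crucial quantitative input. To see the latter, I would recall that the Carnot-Carathéodory distance is controlled by the horizontal length: $d_{CC}(x(s),x_0) \le \int_0^s |\omega(\t)|\, d\t \le \sqrt{s}\, \|\omega\|_{L^2([0,s])}$ by Cauchy-Schwarz, with $\omega = (\omega_1,\dots,\omega_m) \in L^2$. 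Since $d_\infty$ is equivalent to $d_{CC}$ (see \eqref{e-dist-eucl} and the remarks after \eqref{defd_infty}), I obtain a constant $c>0$ such that
\begin{equation*}
\frac{d_\infty^2(x(s),x_0)}{s} \le c\, \|\omega\|_{L^2([0,s])}^2 \xrightarrow[s\to 0^+]{} 0,
\end{equation*}
by absolute continuity of the Lebesgue integral.

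Combining these two facts, for all sufficiently small $s > 0$ the lower bound \eqref{eq-H-lbds} applies and yields
\begin{equation*}
\Gamma^*(x(s), t_0-s;\, x_0, t_0) \ge \frac{c(T_0)}{s^{\mathcal{Q}_C/2}} \exp\!\bigl(-M_0\, c\, \|\omega\|_{L^2([0,s])}^2\bigr).
\end{equation*}
The exponential factor stays bounded below (in fact tends to $1$), while the prefactor $c(T_0) s^{-\mathcal{Q}_C/2}$ diverges to $+\infty$ as $s \downarrow 0$. Hence $\Gamma^*(\g(s); z_0) > 1/r$ for all $s$ in a sufficiently small interval $]0, s_0[$, which is precisely $\g(s) \in \Omega_r(z_0)$ and proves [H.4]. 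The main subtlety I expect is verifying the side condition on the validity region of \eqref{eq-H-lbds}, but as shown above, both of its two quantitative requirements are automatic in the short-time regime along horizontal-type curves.
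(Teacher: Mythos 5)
Your proposal is correct and follows essentially the same route as the paper: reduce [H.4] to checking that $d_\infty^2(x(s),x_0)/s \to 0$ along an admissible curve via the Cauchy--Schwarz bound $d_{cc}(x(s),x_0)\le \sqrt{s}\,\|\omega\|_{L^2([0,s])}$ and the equivalence of $d_\infty$ with $d_{cc}$, so that the side condition for the Gaussian lower bound \eqref{eq-H-lbds} is eventually satisfied and the prefactor $c(T_0)s^{-\mathcal{Q}_C/2}$ forces $\Gamma^*(\gamma(s);z_0)>1/r$ for small $s$. The only cosmetic difference is bookkeeping: the paper fixes an auxiliary $s_1$ with $K_0 s_1^{\mathcal{Q}_C/2}<1$ to get the explicit bound $\Gamma^*\ge K_0 c(T_0)(s_1/s)^{\mathcal{Q}_C/2}$, whereas you argue directly that the ratio tends to zero; both are sound.
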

\begin{proof}
Consider the set $\Omega_r(z)$ for some $z = (x,t) \in \R^{N+1}$, and $r>0$, and denote $\g(s) = (x(s), t-s)$. Let $s_1$ be any positive constant such that $K_0 s_1^{\mathcal{Q}_C/2} < 1$, where $K_0$ is as in the statement of Theorem \ref{th-Gamma-Heat}. We claim that there exists $s_2 \in ]0,s_1]$ such that 
\begin{equation} \label{eq-claim-Heat_smp}
  \frac{d_\infty^2(x(s),x)}{s} \le - \frac{1}{M_0}\log\left( K_0 s_1^{\mathcal{Q}_C/2} \right), \quad \text{for every} \ s \in ]0,s_2], 
\end{equation}
where $M_0, K_0$ are the constants appearing in \eqref{eq-H-lbds}. As a consequence, \eqref{eq-H-lbds} is satisfied for every $(\xi, \tau) = (x(s), t-s) = \gamma(s)$ with $0 < s < s_2 \wedge T_0$, so that
\begin{equation*} 
 \G^*(\gamma(s),(x,t)) \ge K_0c(T_0)\left( \frac{s_1}{s}\right)^{\mathcal{Q}_C/2} > \frac{1}{r}
\end{equation*}
for every $s \in ]0, s_0[$ if we set $s_0 = s_2 \wedge T_0 \wedge s_1 (r K_0c(T_0))^{2/\mathcal{Q}_C}$. This means that $\gamma(s) \in \Omega_r(z)$ for every $s \in ]0, s_0[$, and the proof of {\rm [H.4]} is concluded.

In order to prove that there exists a positive $s_2$ such that our claim \eqref{eq-claim-Heat_smp} holds, we recall that the \emph{Carnot-Carath\'{e}odory distance} between two points $x,y$ belonging to the Carnot group $\mathbb{C} = (\R^{N}, \cdot, \tilde \delta_\lambda)$ can be defined as
\begin{equation}\label{DefCCdist}
 d_{cc}(y,x) = \inf \int_0^T |\omega(\tau)| d \tau
\end{equation}
where the infimum is taken among all the absolutely continuous paths $x: [0,T] \to \R^N$ such that 
\begin{equation} \label{eq-path-cc}
 x(0)=x, \quad x(T) = y, \quad \text{and} \quad \dot x(\tau) = \sum_{j=1}^m \omega_j X_j (x(\tau)).
\end{equation}
By H\"older's inequality, we have 
\begin{equation*}
 \int_0^T |\omega(\tau)| d\tau \le \sqrt{T} \bigg( \int_0^T |\omega(\tau)|^2 d\tau \bigg)^{1/2},
\end{equation*}
then, if we consider the path $x = x(s)$ in \eqref{eq-claim-Heat_smp}, we find
\begin{equation*}
\frac{1}{s} \bigg(\int_0^s |\omega(\tau)| d\tau  \bigg)^{2} \le \int_0^s |\omega(\tau)|^2 d\tau \to 0, 
\quad \text{as} \quad s \to 0,
\end{equation*}
because we assume that $\omega \in L^2([0,T], \R^m)$. Then also
\begin{equation*}
 \frac{d_{cc}^2(x(s),x)}{s} \to 0 \quad \text{as} \quad s \to 0,
\end{equation*}
and the claim \eqref{eq-claim-Heat_smp} follows from the fact that $d_{\infty}$ is equivalent to $d_{cc}$ in all Carnot groups.
\end{proof}

\begin{example} \label{ex-2} {\sc Degenerate Kolmogorov operators.}
{\rm The simplest degenerate Kolmogorov operator defined for $(x,y,t) \in \R^n\times \R^n \times \R$ is
\begin{equation}\label{e-Kolmogorov}
    \K_0 := \sum_{j=1}^n \partial_{x_j}^2 + \sum_{j=1}^n x_j \partial_{y_j} - \partial_t
\end{equation}
and can be written in the form \eqref{e0} with $m=n$ and
\begin{equation*}
    X_j = \partial_{x_j}, \quad j=1, \dots, n, \quad \text{and} \quad X_{n+1} = \sum_{j=1}^n x_j \partial_{y_j} - \partial_t.
\end{equation*}
Note that
\begin{equation*}
    [X_j, X_{n+1}] = \partial_{y_j}, \quad j=1, \dots,n,
\end{equation*}
and H\"ormander condition [H.1] is satisfied. The homogeneous Lie group $\mathbb{K} = \left( \R^{2n+1}, \circ, (\d_\lambda)_{\lambda>0} \right)$ relevant to this operator is defined by the composition rule 
\begin{equation} \label{e-heat-Group-Kolmogorov}
 (x,y,t) \circ (x'y',t') = ( x+ x', y + y' - t' x, t + t'), 
\end{equation}
and by the dilation 
\begin{equation} \label{e-dil-Kolmogorov}
 \d_\lambda (x,y,t) = ( \lambda x, \lambda^2 y, \lambda t ),
\end{equation}  
which is an automorphism of $\mathbb{K}$ and induces a direct sum decomposition on $\R^{2n+1}$
\begin{equation}\label{e-oplus-Kolmogorov}
    \R^{2n+1}=W_{1}\oplus W_{2},
\end{equation}
where $W_1 = \{ (x,0,t) \mid x \in \R^n, t \in \R \}$ and $W_2 = \{ (0,y,0) \mid y \in \R^n \}$. 

The parabolic dilation on $\mathbb{K}$
\begin{equation} \label{e-dil-heat-Kolmogorov}
 \hat \d_\lambda (x,y,t) = ( \lambda x, \lambda^3 y, \lambda^2 t ),
\end{equation}  
induces a different direct sum decomposition on $\R^{2n+1}$
\begin{equation}\label{e-oplus-heat-Kolmogorov}
    \R^{2n+1}=V_{1}\oplus V_{2} \oplus V_{3},
\end{equation}
where $V_1 = \{ (x,0,0) \mid x \in \R^n \}, V_2 = \{ = (0,0,t) \mid t \in \R \}$ and $V_3 = \{ (0,y,0) \mid y \in \R^n \}$. 
Note that the homogeneous dimension of the group $\mathbb K$ with respect to the dilation \eqref{e-dil-Kolmogorov} is $\mathcal{Q} = 3 n + 1$, while its homogeneous dimension with respect to the dilation \eqref{e-dil-heat-Kolmogorov} is $4 n + 2$.
\hfill $\square$
}\end{example}

The above example arises in stochastic analysis and in its several applications, we refer to the article 
\cite{AnceschiPolidoro} for a survey of known results and for a recent bibliography on this subject. The Example \ref{ex-2} is also the proptotype of a more general family of degenerate Kolmogorov operators. Indeed our main results apply to differential operators of the form
\begin{equation} \label{e-Kolmogorov-gen}
    \K_0 := \sum_{i,j=1}^m a_{ij} \partial_{x_i x_j} + \sum \limits_{i,j=1}^N b_{ij} x_j \partial_{x_i} - \partial_t 
\end{equation}
under the assumption that the matrix $A = \left( a_{ij} \right)_{i,j=1, \dots, m}$ has real constant entries, is symmetric and strictly positive in $\R^m$, while $B = \left( b_{ij} \right)_{i,j=1, \dots, N}$ has real constant entries and takes the following form, where we agree to let $m_0= m$ in order to have a simple and consistent notation:  
\begin{equation} \label{B}
   B = \begin{pmatrix}
       \OO &  \OO & \ldots & \OO & \OO  \\
       B_1 & \OO &  \ldots & \OO & \OO \\
	   \OO & B_{2}  & \ldots & \OO & \OO \\
	   \vdots & \vdots  & \ddots & \vdots & \vdots  \\
	   \OO & \OO  & \ldots & B_{\k} & \OO 
	 \end{pmatrix} .
\end{equation}
Here every block $B_j$ is a $m_{j} \times m_{j-1}$ matrix of rank $m_j$ with $j = 1, 2, \ldots, \k$. 
Moreover, the $m_j$s are positive integers such that 
\begin{equation} \label{e-m-cond}
    m_0 \ge m_1 \ge \ldots \ge m_\k \ge 1, \quad {\rm and} \quad m_0 + m_1 + \ldots + m_\k = N
\end{equation}
and all the entries of the blocks denoted by $\OO$ are zeros. Note that a plain change of variable allows us to write the second order part of the operator $\K_0$ as $\sum_{j=1}^m \partial_{x_j}^2$, without modifying the stucture of the matrix $B$. Then the operator $\K_0$ in \eqref{e-Kolmogorov-gen} can be written in the form \eqref{e1} if we set $b=0, c = 0$
\begin{equation*} 
    X_0 = \sum \limits_{i,j=1}^N b_{ij} x_j \partial_{x_i}, \qquad 
    X_j = \partial_{x_j}, \ \text{for} \ j=1, \dots, m.
\end{equation*}
In \cite{LanconelliPolidoro} it has been proved that the operator $\K_0$ in \eqref{e-Kolmogorov-gen} is invariant with respect to the following Lie group structure
\begin{equation} \label{e-circ-K}
     (x,t) \circ (\x, \t) = ( \x + E(\t) x, \hspace{1mm} t + \t), 
    \qquad E(\t) = \exp (- \t B ).
\end{equation}
Moreover $\K_0$ is invariant with respect to the dilation $\delta_\lambda$ defined by the matrix
\begin{equation} \label{e-dil-K1}
   		 \delta_\lambda = \diag ( \lambda \I_{m_0} , \lambda^2 \I_{m_1}, \ldots, \lambda^{\k+1} \I_{m_\k}, \lambda ),
\end{equation}
where $\I_{m_j}$ denotes the identity matrix in $\R^{m_j}$. A direct computation shows that 
\begin{equation*}
    \mathbb{K} = \left( \R^{N+1}, \circ, (\d_\lambda)_{\lambda>0} \right) 
\end{equation*}
is a homogeneous Lie group whose homogeneous dimension is $\mathcal{Q} = m_0+1 +2 m_1 + \dots + (\kappa + 1) m_\kappa$ and that the vector fields $X_1, \dots, X_m, X_{m+1}$ satify the conditions [H.1] and [H.2].

It is remarkable that, if we consider the parabolic dilation
\begin{equation} \label{e-dil-K2}
   		 \tilde \delta_\lambda = \diag ( \lambda \I_{m_0} , \lambda^3 \I_{m_1}, \ldots, \lambda^{2\k+1} \I_{m_\k}, \lambda^2 ),
\end{equation}
then we find the direct sum decomposition $\GG = V_{1}\oplus\dots\oplus V_{2\kappa +1}$ with
\begin{equation*}
\begin{split}
     & V_1 =  \text{span} \big\{X_1, \dots X_m\big\}, 
     \qquad V_2 = \text{span}\big\{ X_{m+1} \big\}, \qquad V_{2j} = \big\{ 0 \big\}, \quad \text{for} \ j=2, \dots, \kappa,\\
    & V_{2j+1} =  \text{span} \big\{[X_i, X_{m+1}]\mid X_i \in V_{2j-1} \big\}, \qquad j=i, \dots, \kappa.
\end{split}
\end{equation*}
This decomposition differs from \eqref{e-Oplus-bis} in that all the spaces $W_1, \dots, W_\mu$ appearing in \eqref{e-Oplus-bis} are non-trivial. The integer $\mathcal{Q}_P = m_{0} + 3 m_{1} + \dots + (2 \k +1) m_{\k} + 2$ is usually referred to as \emph. 

Let us consider the variable coefficient degenerate Kolmogorov operator
 \begin{equation}\label{e-K}
     \K u : = \sum_{i,j=1}^m \partial_{x_i} \left(a_{ij} \partial_{x_j} u \right)  + 
     \sum_{j=1}^m b_{j} \partial_{x_j} u + c u +
     \sum \limits_{i,j=1}^N b_{ij} x_j \partial_{x_i}u - \partial_t u
\end{equation}
where the matrix $A = \left( a_{ij} \right)_{i,j=1, \dots, m}$ is symmetric and satisfies the condition \eqref{elliptic} for some constant $\Lambda \ge 1$, $b_1, \dots, b_m$, and $c$ are bounded continuous functions, and $B = \left( b_{ij} \right)_{i,j=1, \dots, N}$ is as in the Example \ref{ex-2}. Consider also its adjoint operator
 \begin{equation*} 
     \K^* v = \sum_{i,j=1}^m \partial_{x_i} \left(a_{ij} \partial_{x_j}v \right) - 
     \sum_{j=1}^m \partial_{x_j} \left(b_{j} v \right) + c v -
     \sum \limits_{i,j=1}^N b_{ij} x_j \partial_{x_i} v + \partial_tv. 
\end{equation*}
The following result has been proved by one of the authors in \cite{Polidoro2} for Kolmogorov operators with no lower order terms, and by Di Francesco and Pascucci in \cite{DiFrancescoPascucci} in the general setting. It requires a further notation. For any positive $\Lambda$ we consider the operator
\begin{equation}\label{e-KL}
     \K_\Lambda u : = \Lambda \sum_{j=1}^m \partial^2_{x_i} u +
     \sum \limits_{i,j=1}^N b_{ij} x_j \partial_{x_i} - \partial_t.
\end{equation}
Representing the matrix $B$ as in \eqref{B}, where $B_j$ is a $m_{j} \times m_{j-1}$ is matrix of rank $m_j$ for $j = 1, 2, \ldots, \k$, and the integers $m_0, \dots, m_\k$ do satisfy \eqref{e-m-cond}, then the fundamental solution of $\K_\Lambda$ is well defined and can be explicitly written as follows. Denote by $J$ the $N \times N$ matrix $J := \diag ( \I_{m_0} , 0, \dots, 0 )$ and define, for every $t \in \R$, 
\begin{equation*} 
	E(t) = \text{exp} (-t B ), \qquad C(t) = \int_{0}^{t} E(s) \, J \, E^{T}(s) \, ds.
\end{equation*}
Then the matrix $C(t)$ is non singular for every positive  $t$ and the fundamental solution $\G_\Lambda$ of $\K_\Lambda$ is 
\begin{equation} \label{eq-FSGL}
	\G_\Lambda(x,t; \x,\t) = \frac{(4 \pi \Lambda)^{-\frac{N}{2}}}{\sqrt{\text{det} C(t-\t)}} \hspace{1mm}
            \text{exp} \left( - \tfrac{1}{4\Lambda} \langle C^{-1} (t-\t) (x - E(t-\t) \x), x - E(t-\t) \x \rangle
            \right),
\end{equation}
for $t>\t$, while $\G_\Lambda(x,t; \x,\t) = 0$ whenever $t \le \t$. Moreover, $\G^*_\Lambda(\x,\t; x,t) := \G_\Lambda(x,t; \x,\t)$ is the fundamental solution of $\K_\Lambda^*$. With this notation we have

\begin{theorem} \label{th-Gamma-K} {\rm (Theorem 1.4 in \cite{DiFrancescoPascucci})}
Consider the degenerate Kolmogorov operator $\K$ in \eqref{e-K}. As\-sume that the matrix $A = \left( a_{ij} \right)_{i,j=1, \dots, m}$ is symmetric and satisfies the condition \eqref{elliptic} for some constant $\Lambda \ge 1$, while $B = \left( b_{ij} \right)_{i,j=1, \dots, N}$ has the form \eqref{B}, where $B_j$ is a $m_{j} \times m_{j-1}$ is matrix of rank $m_j$ with $j = 1, 2, \ldots, \k$, and the integers $m_0, \dots, m_\k$ do satisfy \eqref{e-m-cond}. Assume also that, for every $i,j= 1, \dots, m$, the coefficients $a_{ij}, b_j, c, \partial_{x_i}a_{ij}$ and $\partial_{x_j}b_{j}$ are bounded functions belonging to the space $C_{\mathbb G}^{\alpha}(\R^{N+1})$ for some $\alpha \in ]0,1]$. Then there exists a fundamental solution $\G^*$ of the adjoint operator $\K^*$. 
Moreover, $\G^*$ satisfies the following estimates: for every positive $T$ there exist four positive constants $\Lambda^+$ only depending on $\K$, and $\Lambda^-, C^-(T)$ and $C^+(T)$, also depending on $T$, such that 
\begin{equation}\label{eq-K-bds}
 C^-(T) \, \G^*_{\Lambda^-}((\x,\t),(x,t)) \le \G^*((\x,\t),(x,t)) \le C^+(T) \, \G^*_{\Lambda^+}((\x,\t),(x,t)), 
\end{equation}
for every  $(\x,\t),(x,t) \in \R^{N+1}$ with $0 < t-\t \le T$. 
\end{theorem}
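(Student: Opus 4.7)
The plan is to follow the Levi parametrix method, adapted to the subelliptic Kolmogorov setting as in \cite{Polidoro2, DiFrancescoPascucci}. The principle is to use as parametrix the explicit constant-coefficient fundamental solution \eqref{eq-FSGL}, with the matrix $\Lambda \I$ replaced by $A(z)$ frozen at a suitable point, and then to correct it by an integral term that absorbs the error produced by the variable coefficients. The $\alpha$-H\"older regularity of the entries of $A, b, c$ with respect to the intrinsic distance $d_\infty$ will be the crucial ingredient making the resulting Volterra iteration convergent.

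Concretely, for every $z = (x,t)$ I would define the parametrix
\[
Z^*(\zeta; z) := \Gamma^*_{A(z)}(\zeta; z), \qquad \zeta = (\xi, \tau),
\]
given by \eqref{eq-FSGL} with $\Lambda \I$ replaced by $A(z)$, which is a fundamental solution of the \emph{frozen} adjoint operator and hence, for $z$ fixed, an approximation of $\Gamma^*$. The ansatz
\[
\Gamma^*(\zeta; z) = Z^*(\zeta; z) + \int_\tau^t\!\! \int_{\R^N} Z^*(\zeta; w)\, \Phi(w; z)\, dw
\]
then reduces the equation $\K^* \Gamma^*(\cdot; z) = 0$ to a Volterra equation $\Phi = J + J * \Phi$, where the kernel $J(\zeta; z) = \K^* Z^*(\zeta; z)$ measures the failure of the parametrix to solve the true equation. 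The H\"older estimate
\[
|J(\zeta; z)| \le C\, (t-\tau)^{-1+\alpha/2}\, \Gamma^*_{\Lambda^+}(\zeta; z)
\]
is then iterated, giving a convergent Neumann series $\Phi = \sum_{n \ge 1} J^{(*n)}$, and one checks that the resulting $\Gamma^*$ satisfies Definition \ref{def-fund-sol}, e.g.\ by reproducing the initial datum through the standard dominated convergence argument using the upper Gaussian bound on $Z^*$.

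The hardest step is obtaining the gain of $(t-\tau)^{\alpha/2}$ in the estimate of $J$, which is what makes the time singularity of the Volterra kernel integrable. In the degenerate setting the raw bound on $\K^* Z^*$ is only of order $(t-\tau)^{-1}$; the improvement relies on writing
\[
\K^* Z^*(\zeta; z) = \sum_{i,j=1}^{m} \bigl( a_{ij}(\zeta) - a_{ij}(z) \bigr) X_i^* X_j^* Z^*(\zeta;z) + \text{lower order terms},
\]
and on trading the H\"older difference $a_{ij}(\zeta)-a_{ij}(z)$, controlled by $d_\infty(\zeta,z)^\alpha$, against one power of the sharp Gaussian estimates for the second-order derivatives $X_i^* X_j^* Z^*$, which have the correct homogeneity $(t-\tau)^{-1}$ with respect to the dilation \eqref{e-dil-K1} because $X_1,\dots,X_m$ are $\delta_\lambda$-homogeneous of degree one. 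This is precisely the point where hypotheses [H.1], [H.2] and the structural form \eqref{B} of $B$ are essential, since they provide both the intrinsic H\"older scale and the sharp derivative estimates on the explicit Gaussian \eqref{eq-FSGL}.

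Finally, the upper bound in \eqref{eq-K-bds} is transferred from $Z^*$ to $\Gamma^*$ by summing the geometric series for $\Phi$ and absorbing constants into $\Lambda^+$; the lower bound is the more delicate of the two, and I would derive it by first establishing it in a small time interval via the triangle-type inequality coming from the positivity of the principal part of $Z^*$ near the diagonal, and then propagating it to arbitrary times by a chaining argument based on the Chapman--Kolmogorov identity satisfied by $\Gamma^*$, analogously to the classical Aronson-type lower bound in the uniformly parabolic case.
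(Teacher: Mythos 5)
The paper does not prove Theorem \ref{th-Gamma-K} at all: it is imported verbatim as Theorem 1.4 of \cite{DiFrancescoPascucci}, so there is no internal argument to compare your proposal against. That said, your sketch is a faithful outline of the Levi parametrix proof that is the actual method of that reference (building on \cite{Polidoro2} for the constant-coefficient model): the parametrix is the explicit Gaussian \eqref{eq-FSGL} frozen at the pole, the correction is found through a Volterra equation whose kernel gains an integrable power $(t-\tau)^{\alpha/2-1}$ by trading the intrinsic $C^\alpha_{\mathbb G}$ regularity of $A$ against the sharp second-derivative bounds on the Gaussian, and the two-sided bound \eqref{eq-K-bds} is obtained by summing the Neumann series for the upper estimate and by a short-time near-diagonal argument propagated via Chapman--Kolmogorov chaining for the lower one. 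One calibration slip worth fixing: the homogeneity that makes $X_i X_j Z^*$ pick up the factor $(t-\tau)^{-1}$ is with respect to the \emph{parabolic} dilation \eqref{e-dil-K2}, under which $t-\tau$ scales as $\lambda^2$; under the dilation \eqref{e-dil-K1} that you cite (the one the paper uses for the mean-value geometry) the time variable scales only as $\lambda$ and the degree count would not match $(t-\tau)^{-1}$. Apart from that notational point, your plan has no essential gap and correctly identifies where the H\"{o}lder regularity and the structural form \eqref{B} of $B$ enter the argument.
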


The following result ensures that the mean value formulas stated in Theorem \ref{th-1} hold for every operator $\K$ satisfying the assumptions of Theorem \ref{th-Gamma-K}.

\begin{proposition} \label{prop-Gamma-Kolmo-1} If the operator $\K$ defined in \eqref{e-K} satisfies the assumption of Theorem \ref{th-Gamma-K}, then there exists a positive $r_0$ such that \eqref{eq-claim-1} and \eqref{eq-claim-2} hold for every $z_0 \in \R^{N+1}$. 
\end{proposition}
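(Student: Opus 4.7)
The plan is to parallel the proof of Proposition \ref{prop-Gamma-Heat-1}, with the two-sided estimates \eqref{eq-K-bds} playing the role of the Gaussian bounds \eqref{eq-H-bds}--\eqref{eq-H-lbds}. The extra ingredient specific to Kolmogorov operators is the explicit form \eqref{eq-FSGL} of $\G^*_\Lambda$, combined with the scaling identity
\[
 C(\varepsilon) = D_{\sqrt{\varepsilon}}\, C(1)\, D_{\sqrt{\varepsilon}}^T, \qquad
 D_\lambda := \diag\bigl(\lambda \I_{m_0},\, \lambda^3 \I_{m_1},\, \ldots,\, \lambda^{2\kappa+1}\I_{m_\kappa}\bigr),
\]
which follows from the definition $C(t)=\int_0^t E(s) J E^T(s)\,ds$ and the commutation $E(\lambda^2 s)=D_\lambda E(s) D_\lambda^{-1}$. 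Setting $Q_{\mathrm{sp}} := m_0 + 3m_1 + \cdots + (2\kappa+1)m_\kappa \geq m_0 \geq 1$, we obtain the scaled representation
\[
 \G^*_\Lambda(x, t_0-\varepsilon;\, x_0, t_0) \;=\; A_\Lambda\, \varepsilon^{-Q_{\mathrm{sp}}/2}\, \exp\!\Bigl(-\tfrac{1}{4\Lambda}\, Q(y)\Bigr), \qquad y := D_{1/\sqrt{\varepsilon}}\bigl(x_0 - E(\varepsilon)x\bigr),
\]
with $Q(y):=\langle C^{-1}(1)y, y\rangle$ positive definite and $A_\Lambda>0$ explicit. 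Because the diagonal blocks of $B$ in \eqref{B} vanish, $\trace(B)=0$, hence $\det E(\varepsilon)=1$ and the change of variables $x\leftrightarrow y$ has Jacobian $\varepsilon^{Q_{\mathrm{sp}}/2}$.

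With this setup, all three conclusions become direct computations. Fix $T>0$ and let $C^\pm(T), \Lambda^\pm$ be as in Theorem \ref{th-Gamma-K}. From $\G^* \leq C^+(T)\, \G^*_{\Lambda^+}$, the condition $\G^*(z;z_0) > 1/r$ forces both $\varepsilon \leq (r\, C^+(T)\,A_{\Lambda^+})^{2/Q_{\mathrm{sp}}}$ and $Q(y) \leq 4\Lambda^+ \log\!\bigl(r\, C^+(T)\, A_{\Lambda^+}\,\varepsilon^{-Q_{\mathrm{sp}}/2}\bigr)$; choosing $r_0$ so small that the first inequality entails $\varepsilon \leq T$ confines $(y,\varepsilon)$ to a compact set, and unwinding the change of variables yields \eqref{eq-claim-1}. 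The same upper bound confines $\mathcal{I}_{r,\varepsilon}(z_0)$, in the $y$-coordinate, to an ellipsoid of Euclidean volume $O\bigl((\log(1/\varepsilon))^{N/2}\bigr)$, hence
\[
 \H_e^N(\mathcal{I}_{r,\varepsilon}(z_0)) \;\leq\; C\, \varepsilon^{Q_{\mathrm{sp}}/2}\, (\log(1/\varepsilon))^{N/2} \;\longrightarrow\; 0,
\]
proving the first half of \eqref{eq-claim-2}. For the remaining bound, the lower estimate $\G^* \geq C^-(T)\, \G^*_{\Lambda^-}$ gives the inclusion $\mathcal{I}_{r,\varepsilon}(z_0) \supset \{\G^*_{\Lambda^-}>1/(rC^-(T))\}$, so its complement maps into $\{Q(y) \geq c_1 + c_2 \log(1/\varepsilon)\}$ for positive constants $c_1, c_2$. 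Combining with the upper bound and undoing the change of variables,
\[
 \int_{\R^N \setminus \mathcal{I}_{r,\varepsilon}(z_0)}\!\!\!\G^*(x, t_0-\varepsilon; z_0)\, dx \;\leq\; C^+(T)\, A_{\Lambda^+} \!\int_{\{Q(y) \geq c_1 + c_2 \log(1/\varepsilon)\}}\!\!\exp\!\bigl(-Q(y)/(4\Lambda^+)\bigr)\, dy,
\]
and the right-hand side vanishes as $\varepsilon \to 0$ by dominated convergence.

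The main point requiring care is notational bookkeeping: the Gaussian in \eqref{eq-FSGL} is homogeneous with respect to the \emph{parabolic} dilation \eqref{e-dil-K2} rather than the dilation \eqref{e-dil-K1} used to satisfy {\rm [H.2]}, so one must work with the matrix $D_\lambda$ above instead of the group-theoretic $\delta_\lambda$ appearing in \eqref{e-dil-K1}. Once the correct scaling matrix and the identity $\trace(B)=0$ are in place, the estimates are routine adaptations of the Carnot-group argument of Proposition \ref{prop-Gamma-Heat-1}.
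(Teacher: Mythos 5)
Your proof is correct and follows exactly the route the paper intends: the paper omits this proof, stating only that it is ``analogous to that of Proposition \ref{prop-Gamma-Heat-1}'', and your argument is precisely that adaptation, with the upper/lower bounds \eqref{eq-K-bds} replacing \eqref{eq-H-bds}--\eqref{eq-H-lbds} and the parabolic scaling $C(\varepsilon)=D_{\sqrt{\varepsilon}}C(1)D_{\sqrt{\varepsilon}}^T$, $\det E(\varepsilon)=1$ supplying the change of variables that in the Carnot case was $y=\delta_{1/\sqrt{\varepsilon}}(x_0^{-1}\circ x)$. The scaling identities you invoke are indeed the standard ones from \cite{LanconelliPolidoro}, and your bookkeeping with the parabolic dilation \eqref{e-dil-K2} versus \eqref{e-dil-K1} is exactly the point the paper flags as the only delicate one.
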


The proof of Proposition \ref{prop-Gamma-Kolmo-1} is analogous to that of Proposition \ref{prop-Gamma-Heat-1} and is omitted. 
We next prove that, under the assumption of Theorem \ref{th-Gamma-K}, the operator $\K$ satisfies  {\rm [H.4]}, then Theorem \ref{Thm-smp} does apply to $\K$. We then find the following strong maximum-minimum principle for degenerate Kolmogorov operators.

\begin{proposition} \label{prop-Gamma-Kolmo-2} 
Let $\K$ be the operator defined in \eqref{e-K}, satisfying the assumption of Theorem \ref{th-Gamma-K}, with $c \le 0$ and $c - \div_{\mathbb{G}} \, b < 0$. Let $u$ be a classical solution to $\K u = f$ in an open subset $\O\subset\R^{N+1}$, and let $z_0=(x_0,t_0) \in \O$ be such that $u (z_0) = \max_\Omega u \ge 0$ and $f \ge 0$ in $\Omega$; then 
\begin{equation*}
u(z) = u(z_0) \quad \text{and} \quad f(z) = u(z_0) c(z) \qquad \text{for every} \ z \in \overline {\AS  ( \O )}.
\end{equation*}
The analogous result holds true if $u (z_0) = \min_\Omega u \le 0$ and $f \le 0$ in $\Omega$. 
Moreover, we can drop the assumption on the sign of $u (z_0)$ if $c = 0$. 
\end{proposition}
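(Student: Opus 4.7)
The plan is to derive the statement as a direct consequence of Theorem~\ref{Thm-smp}. All the hypotheses of Theorem~\ref{Thm-smp} except [H.4] are already in place: [H.1] and [H.2] together with the Lie group structure are given in Example~\ref{ex-2}; [H.3] is Theorem~\ref{th-Gamma-K}; the decay conditions \eqref{eq-claim-1}--\eqref{eq-claim-2} are Proposition~\ref{prop-Gamma-Kolmo-1}; and the sign assumptions on $c$ and $c-\div_{\mathbb G}b$ are built into the hypotheses. Hence the entire work of the proof reduces to verifying condition [H.4] for $\K$.

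To verify [H.4], I would fix $z=(x,t)\in\R^{N+1}$, $r>0$, and an $\L$-admissible curve $\gamma$ with $\gamma(0)=z$. Writing $\gamma(s)=(x(s),t-s)$, admissibility reads $\dot x(s)=Bx(s)+\omega^*(s)$ with $\omega^*(s)=(\omega_1(s),\ldots,\omega_m(s),0,\ldots,0)^T$ and $\omega\in L^2([0,T],\R^m)$. Setting $v(s):=E(s)x(s)-x$, where $E(s)=\exp(-sB)$, a routine ODE computation gives $v(0)=0$ and $v(s)=\int_0^s E(r)\omega^*(r)\,dr$. Combining the lower bound of Theorem~\ref{th-Gamma-K} with the explicit formula \eqref{eq-FSGL} for $\Gamma^*_{\Lambda^{-}}$ then yields, for small $s>0$,
\[
\Gamma^*(\gamma(s);z)\ \ge\ \frac{C^{-}(T)\,(4\pi\Lambda^{-})^{-N/2}}{\sqrt{\det C(s)}}\exp\!\left(-\frac{\langle C^{-1}(s)v(s),v(s)\rangle}{4\Lambda^{-}}\right).
\]

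The argument then closes by invoking two classical facts on Kolmogorov operators. First, the homogeneity of $\K_{\Lambda^{-}}$ with respect to the dilation \eqref{e-dil-K1} implies that $\det C(s)$ equals a strictly positive power of $s$ (see \cite{LanconelliPolidoro}), so the prefactor above diverges polynomially as $s\to 0^+$. Second, $C(s)$ is the Kalman matrix of the linear control system $\dot\xi=B\xi+J^{1/2}u$: the minimum $L^2$ energy needed to steer $0$ to $v(s)$ in time $s$ equals $\langle C^{-1}(s)v(s),v(s)\rangle$, and since the restriction of $\omega^*$ to $[0,s]$ is an admissible control producing $v(s)$, we obtain $\langle C^{-1}(s)v(s),v(s)\rangle\le\int_0^s|\omega(r)|^2\,dr$, which vanishes as $s\to 0^+$ by absolute continuity of the Lebesgue integral. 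Combining these two facts gives $\Gamma^*(\gamma(s);z)\to+\infty$ as $s\to 0^+$, so $\Gamma^*(\gamma(s);z)>1/r$ on some interval $]0,s_0[$, i.e., $\gamma(s)\in\Omega_r(z)$, proving [H.4]. The main obstacle is the quantitative estimate on $\langle C^{-1}(s)v(s),v(s)\rangle$; the control-theoretic reading turns it into a one-line computation and bypasses the need for delicate matrix-exponential analysis.
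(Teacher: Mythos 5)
Your proposal is correct and follows the same overall architecture as the paper's proof: everything reduces to verifying [H.4] for $\K$, which is done by combining the lower bound in \eqref{eq-K-bds} with the explicit Gaussian \eqref{eq-FSGL} and then controlling the quadratic form $\langle C^{-1}(s)v(s),v(s)\rangle$ along an admissible curve, exactly as in the displayed inequality of the paper's argument (note that $x-E(s)x(s)=-v(s)$, so the two quadratic forms coincide). The difference lies in how that quadratic form is estimated. The paper only needs, and only claims, the logarithmic bound $\langle C^{-1}(s)v(s),v(s)\rangle \le C_0+C_1\bigl(\log r-\tfrac{\Q_P-2}{2}\log s\bigr)$, and it outsources this to Lemma 3.7 of \cite{ALanconelliPascucciPolidoro}. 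You instead prove the stronger statement that $\langle C^{-1}(s)v(s),v(s)\rangle\to 0$ as $s\to 0^+$, by recognizing $C(s)$ as the controllability Gramian of $\dot\xi=B\xi+J^{1/2}u$ and observing that the restriction of $\omega^*$ to $[0,s]$ is an admissible control steering $0$ to $v(s)=\int_0^sE(r)\omega^*(r)\,dr$, whence the minimum-energy identity gives $\langle C^{-1}(s)v(s),v(s)\rangle\le\int_0^s|\omega|^2\to 0$. This is a clean, self-contained one-line replacement for the cited lemma, and it is the exact analogue of the Carnot--Carath\'eodory energy argument the paper uses in Proposition \ref{prop-Gamma-Heat-2}; what it buys is independence from \cite{ALanconelliPascucciPolidoro} and a sharper asymptotic (the exponential factor tends to $1$, not merely to a power of $s$ beaten by the prefactor). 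One cosmetic remark: the scaling identity $\det C(s)=s^{\Q_P-2}\det C(1)$ is most naturally read off from the parabolic dilation \eqref{e-dil-K2} (equivalently from the joint space--time scaling of $E$ and $J$), rather than from \eqref{e-dil-K1} alone, but the fact itself is standard from \cite{LanconelliPolidoro} and your use of it is correct.
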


\begin{proof}
We first prove that $\K$ satisfies  {\rm [H.4]}. Consider the set $\Omega_r(z)$ for some $z = (x,t) \in \R^{N+1}$, and $r>0$, and let $T>0$ be such that $\Omega_r(z) \subset \R^{N} \times ]t- T, t[$. From \eqref{eq-K-bds} it follows that 
\begin{equation*}
 \Omega_r(z) \supset \widehat \Omega_r(z) := \left\{ \z \in \R^{N} \times ]- \infty, t[ : 
 \Gamma_{\Lambda^-}^*(\z;z) > \tfrac{1}{C^-(T) r} \right\}.
\end{equation*}
As in the proof of Proposition \ref{prop-Gamma-Heat-2}, we need to show that there exists a positive $s_2$ such that $\g(s) \in \widehat \Omega_r(z)$ for every $s \in ]0,s_2[$. If we denote  $\g(s) = (x(s), t-s)$, in view of \eqref{eq-FSGL}, this means that
\begin{equation*}
 \langle C^{-1}(s)(x - E(s)x(s)), x - E(s)x(s) \rangle \le C_0 + C_1 \left(\log(r) - \tfrac{\Q_P -2}{2} \log (s) \right)
\end{equation*}
for some positive constants $C_0, C_1$ depending on $\Lambda^-$ and on $T$. Recall that $\Q_P$ is the parabolic homogeneous dimension of $\mathbb{K}$. This fact has been proved in Lemma 3.7 of \cite{ALanconelliPascucciPolidoro}. This concludes the proof of {\rm [H.4]}, then Theorem \ref{Thm-smp} does apply to $\K$.
\end{proof}

\begin{remark} {\rm In the case of degenerate Kolmogorov operators the geometry of the propagation set doesn't agree with the one  relevant to uniformly parabolic operators. Consider for instance the following operator $\K$: 
\begin{equation*}
 \K u(x,y,t) = \partial_x \big( a(x,y,t) \partial_x  u(x,y,t) \big) + x \partial_y u(x,y,t) - \partial_t u(x,y,t)
\end{equation*}
defined for $(x,y,t) \in \Omega = ]-R,R[ \times ]-1,1[ \times ]-1,1[ \subset \R^3$, with $R>0$, and let $z_0 = (0,0,0)$. Then $\AS(\Omega) = \{ (x,y,t) \in \Omega : |y| < - R\, t \}$ (see Fig. 2 below). We recall that in Proposition 4.5 of \cite{CintiNystromPolidoro} it is shown that there exists a non-negative solution $u$ to $\K u =0$ vanishing in the set $\overline{\AS(\Omega)}$ and strictly positive elsewere. Then the minimum principle stated in Proposition \ref{prop-Gamma-Kolmo-2} is sharp.}
\end{remark}

\begin{tikzpicture}
\clip (-4.2,-3.2) rectangle (5,2.2);
\fill[black!7](-2,.5)--(2,-.5)--(1,-3)--(-3,-2);
\fill[black!14](2,-.5)--(3,-2)--(1,-3);
\draw[->,black!50,line width=.7pt](0,-2)--(0,1.4);%
\node at (.2,1.15) {$t$};
\draw[->,black!50,line width=.7pt](-3,.75)--(4,-1);%
\node at (4,-.8) {$x$};
\draw[->,black!50,line width=.7pt](2,1)--(-2,-1);%
\node at (-2,-.8) {$y$};
\draw[black!60](-3,0)--(1,-1);%
\draw[black!60](-3,-2)--(1,-3);%
\draw[black!60](3,0)--(-1,1);%
\draw[black!60, dashed](3,-2)--(-1,-1);%
\draw[black!60](3,0)--(3,-2);%
\draw[black!60](-3,0)--(-3,-2);%
\draw[black!60](1,-1)--(1,-3);%
\draw[black!60, dashed](-1,1)--(-1,-1);%
\draw[black!60](3,0)--(1,-1);%
\draw[black!60](-3,0)--(-1,1);%
\draw[black!60](3,-2)--(1,-3);%
\draw[black!60, dashed](-3,-2)--(-1,-1);%
\draw[black!40](-2,.5)--(2,-.5);%
\draw[black!40, dashed](0,0)--(0,-2);%
\draw[black!40] (1,-3) -- (2,-.5)-- (3,-2);%
\draw[black!40, dashed](-2,.5)--(-1,-1);%
\draw[black!40](- 2,.5)--(-3,-2);%
\node at (.7,.2) {$(0,0,0)$};
\draw[line width=2pt] (0,0) circle (.2mm);
\end{tikzpicture}

{\sc Fig.2}  - The set $\AS(\Omega)$.

\bigskip

\begin{remark} \label{rem-Gamma-Kolmo} {\rm Theorem \ref{th-Gamma-K} and Propositions \ref{prop-Gamma-Kolmo-1} and \ref{prop-Gamma-Kolmo-2} have been proved in \cite{DiFrancescoPascucci} under a less restrictive assumption. Specifically, the Lie group defined by \eqref{e-circ-K} doesn't need to be homogeneous. The matrix $B$ is assumed to have the following form
\begin{equation} \label{BB}
   B = \begin{pmatrix}
       \ast & \ast & \ldots & \ast & \ast  \\
       B_1 & \ast &  \ldots & \ast & \ast \\
	   \OO & B_{2}  & \ldots & \ast & \ast \\
	   \vdots & \vdots  & \ddots & \vdots & \vdots  \\
	   \OO & \OO  & \ldots & B_{\k} & \ast 
	 \end{pmatrix},
\end{equation}
where every block $B_j$ is a $m_{j} \times m_{j-1}$ matrix of rank $m_j$ with $j = 1, 2, \ldots, \k$, and the blocks denoted by $\ast$ are arbitrary.

Note that, in this case, the identity \eqref{e-adj} still holds for $j= 1, \dots, m$, while $X^*_{m+1} f = - X_{m+1} f - \text{\rm tr} B f$ for suitably smooth test functions $f$.  Moreover, bounds analogous to \eqref{eq-K-bds} hold for $\left| X^*_j \G^*((\x,\t),(x,t)) \right|$, for $j=1, \dots, m+1$, and for $\left| X^*_i X^*_j \G^*((\x,\t),(x,t)) \right|, i,j = 1, \dots m$. 
}\end{remark}


\def\cprime{$'$} \def\cprime{$'$} \def\cprime{$'$}
  \def\lfhook#1{\setbox0=\hbox{#1}{\ooalign{\hidewidth
  \lower1.5ex\hbox{'}\hidewidth\crcr\unhbox0}}} \def\cprime{$'$}
  \def\cprime{$'$} \def\cprime{$'$}

\end{document}